\renewcommand{\today}{
  \ifcase\month\or
  January\or February\or March\or April\or May\or June\or
  July\or August\or September\or October\or November\or December\fi
  \space \number\year}
\newcommand\nmodels{\mathbin{\cancel{\models}}}
\newcommand\model[2]{\ensuremath{\fent{#1}_{\hbox{\vrule height-10pt depth5pt width-3pt}^{#2}}}}
\newcommand\nmodel[2]{\ensuremath{\nfent{#1}_{\hbox{\vrule height-10pt depth5pt width-3pt}^{#2}}}}
\newtheorem{thm}{Theorem}[section]
\newtheorem{cor}[thm]{Corollary}
\newtheorem{lem}[thm]{Lemma}
\theoremstyle{definition}
\newtheorem{defn}[thm]{Definition}
\newtheorem{exa}[thm]{Example}
\theoremstyle{remark}
\newtheorem{rem}[thm]{Remark}
\DeclareSymbolFont{AMSb}{U}{msb}{m}{n}
\DeclareMathSymbol{\N}{\mathbin}{AMSb}{"4E}
\DeclareMathSymbol{\Z}{\mathbin}{AMSb}{"5A}
\DeclareMathSymbol{\R}{\mathbin}{AMSb}{"52}
\DeclareMathSymbol{\Q}{\mathbin}{AMSb}{"51}
\DeclareMathSymbol{\I}{\mathbin}{AMSb}{"49}
\DeclareMathSymbol{\C}{\mathbin}{AMSb}{"43}
\def\dotminussym#1#2{%
  \setbox0=\hbox{$\m@th#1-$}%
  \kern.5\wd0%
  \hbox to 0pt{\hss\hbox{$\m@th#1-$}\hss}%
  \raise.8\ht0\hbox to 0pt{\hss$\m@th#1.$\hss}%
  \kern.5\wd0}
\title{On the compactness property of extensions of first-order G\"{o}del logic}
\author{Seyed Mohammad Amin Khatami}
\address{Department of Mathematics and Computer Science, Amirkabir University of Technology, Tehran, Iran}
\email{amin\_khatami@aut.ac.ir} \urladdr{}
\begin{document}
\maketitle
\begin{abstract}
We study three kinds of compactness in some variants of \g logic: compactness,
entailment compactness, and approximate entailment compactness.
For countable first-order underlying language we use the Henkin
construction to prove the compactness property of extensions of
first-order \g logic enriched by nullary connective or the Baaz's
projection connective. In the case of uncountable first-order language
we use the ultraproduct method to derive the compactness theorem.\\
\smallskip
\noindent\emph{Keywords:} Mathematical fuzzy logic; \g logic; Rational \g logic; Compactness theorem
\end{abstract}
\section{Introduction}
Compactness theorem is one of the most important theorems in classical first-order logic.
This theorem says that any finitely satisfiable theory is satisfiable. Certainly, this
property provides a procedure to find models of a theory whose
finite subsets have models. So, it could be considered as a foundation for
model theoretical studies of any logic. Due to the fact that the
model theory of mathematical fuzzy logic is still underdeveloped, study of
compactness property would be a topic of interest in the area of
mathematical fuzzy logic. In the case of t-norm based fuzzy logics and
their extensions, this is done by several authors
\cite{hajek98,baaz1998compact,navara2001compactness,gerla2001abstract,cintula2004compactness,
benust10,preining2003complete,cintula2005two,tpd2011,pourmahtavana2012}.

Among t-norm based fuzzy logics, three of them are quite important
(\g, \lo, and product logic). So, almost all studies around
compactness property are done for these triple.Note that various kinds
of compactness are available for t-norm based fuzzy logic, e.g., compactness
\cite{navara2001compactness,gerla2001abstract,cintula2004compactness,cintula2005two},
entailment compactness \cite{baaz1998compact,preining2003complete,cintula2005two}and $K$-compactness
\cite{cintula2005two,tpd2011,pourmahtavana2012} where $K$ is a closed subset
of standard truth value set $[0,1]$. The usual compactness
is the same as $\{1\}$-compactness. Let us remind that a logic
enjoys the entailment compactness if for every theory $T$ and sentence
$\varphi$, $T\models\varphi$ implies the existence of a finite
subset $T'$ of $T$ such that $T'\models\varphi$.

In first-order \g logic, different truth value sets cause
different results about compactness. A truth value set in general is taken
to be any linearly ordered Heyting algebra $\mathbb{D}$. The standard truth
value set is commonly assumed
to be a \g set which is a closet subset of $[0,1]$ containing $0$ and $1$.
The first-order \g logic
whose truth value set is a \g set $V$ is denoted by $\mathfrak{G}_V$.
Recently, all tree mentioned instance of compactness are studied
for \g set $\mathfrak{G}_V$ \cite{pourmahtavana2012,preining2003complete}.
Furthermore, \cite{preining2003complete} studies the extensions of
\g logic $\mathfrak{G}_V$ by $\Delta$ Baaz projection connective.

In \lo logic as well as its extension such as rational Pavelka logic (RPL)
and continuous first-order logic (CFO) the compactness theorem
is extensively studied in several frameworks
\cite{hajek98,cignoli2007lukasiewicz,esteva2007adding,pavelka79,tpd2011,benust10}. The
continuity of logical connectives of \lo logic
with respect to the usual order topology on $[0,1]$ is the main reason for
the compactness theorem to be held in these logics. By different methods
such as Henkin construction, Pavelka completeness, and ultraproduct
method the compactness theorem proved in these logics.

Study of the compactness property for extensions of \g logic is different from
two viewpoints. Firstly, the \g logic implication is not a continuous function
with respect to the usual order topology on \g sets. So,
the Pavelka method and ultraproduct method could not be used
directly in extensions of \g logic. However, a modification of these methods
may work here. Secondly, the corresponding algebras with respect to the
extensions of \g logics can not be embedded into the standard truth value
sets (\g sets) unless the algebras are at most countable. But, we
need such an embedding to prove the compactness theorem by the Henkin construction .
So, the Henkin construction only works for theories with at most countable first-order
underlying languages.

We consider two approaches to prove the compactness property in extensions of
\g logics. The first one is based on the Henkin construction, and so it
works only for theories with at most countable first-order underlying
languages. The other approach is based on the ultraproduct method.
We consider a metric on \g sets such that the logical connectives
of the corresponding extension of \g logic are continuous with respect
to the new metric.

In \lo logic if "$e$" is a similarity relation, then the interpretation
of "$1-e$" becomes a pseudometric. But, we have not a logical connective
such as "minus" in \g logic. However, if one considers a reverse semantical meaning
on truth value set, the interpretation of similarity relation will be a
pseudometric in any t-norm based fuzzy logic. Furthermore, assuming
such a semantic leads to obtain a pseudometric
on the corresponding algebras of the logic. Besides these two pseudometrics,
the continuity of logical connectives and also continuity of the
interpretation of function and predicate symbols are directly intelligible.
So, using the ultraproduct method motivates us to consider a reverse semantical
meaning on \g sets which we call it the metrically semantic of the logic. Thus,
$0$ stands for absolute truth while $1$ for absolute falsity. Anyway,
we present a translation of results for the everyday \g logic in the final section.

This paper is organized as follows. In the next section, we introduce the main
notions of extensions of \g logic such as logical connectives, metrically
semantic, satisfiability, and so forth. Section 3 studies the main
concept of the paper by studding different notions of
compactness in several kinds of extensions of \g logic.
Section 4 presents the notion of
ultrametric structure and prove the compactness property for
some variants of \g logics without any limitation on the size of the underlying
first-order language. In the last section, a translation of results
for the usual semantic (in which 0 stands for falsity) is given.
\section{Preliminaries}\label{section godel logic}
The logic that we will consider in this paper is the \g logic whose semantic is
based on \g sets, i.e, subsets of unite interval $[0,1]$ containing $0$ and $1$ and
closed under the standard order topology. Logical symbols of the first-order \g
logic are the usual connectives of classical
first-order logic $\{\wedge, \to, \bot\}$ together with the quantifiers $\{\forall, \exists\}$
and a countable set of variables.

We use a reverse semantical meaning on the set of truth values. Indeed, this assumption makes
the interpretation of similarity relation a pseudometric. So, semantically $0$ is the absolute truth
and $1$ is the absolute falsity of the truth value set.

When a \g set $V$ is considered as the set of truth values, we use
the notion $\mathfrak{G}_V$ for corresponding \g logic.
Enriching $\mathfrak{G}_V$ by a countable set of nullary connectives
$\bar{A}=\{\bar{r}: r\in A\subseteq V\setminus\{0,1\}\}$ leads to an extension of \g logic,
$\mathfrak{G}_{V,A}$. Observe that the nullary connective $\bar{1}$ is actually
$\bot$. Another extension of \g logic is obtained by adding the unary
connective $\Delta$. The corresponding
\g logics equipped by $\Delta$ are denoted by $\mathfrak{G}^\Delta_V$ or
$\mathfrak{G}^\Delta_{V,A}$, respectively. Lets take an abbreviation for some \g logics:
\begin{itemize}
\item $\mathfrak{G}_\mathbb{R}$: $V=[0,1]$ and $A=\emptyset$.
\item $\mathfrak{G}_\downarrow$: $V=[0,1]_\downarrow$ and $A=(0,1)_\downarrow$ where $[0,1]_\downarrow=\{\frac{1}{n}: n\in\mathbb{N}\}\cup\{0\}$ and
$(0,1)_\downarrow=[0,1]_\downarrow\setminus\{0,1\}$.
\item $\mathfrak{G}_n$: $V=\{r_1, ..., r_n\}\cup\{0,1\}$ and $A=\{r_1, ..., r_n\}$ where $0<r_1< r_2<...<r_{n-1}<r_n<1$.
\item $\mathfrak{G}^*_\downarrow$: $V=[0,1]$ and $A=(0,1)_\downarrow$.
\item $\mathfrak{G}^*_n$: $V=[0,1]$ and $A=\{r_1, ..., r_n\}$ where $0<r_1< r_2<...<r_{n-1}<r_n<1$.
\item $RGL$: $V=[0,1]$ and $A=(0,1)\cap\mathbb{Q}$.
\end{itemize}
Within this paper, we assume that $\mathcal{L}$ is a first-order language.
$\mathcal{L}$-terms and $\mathcal{L}$-formulas are constructed as in classical first-order logic.
Basic notions of free and bound variable, $\mathcal{L}$-sentence and $\mathcal{L}$-theory
are defined as usual. In particular, note that $\bar{r}$ is an $\mathcal{L}$-sentence in
$\mathfrak{G}_{V,A}$ for each $r\in A$. The set of $\mathcal{L}$-formulas
and $\mathcal{L}$-sentences are denoted by
$Form(\mathcal{L})$ and $Sent(\mathcal{L})$, respectively. When
there is no danger of confusion, we may omit the prefix $\mathcal{L}$
and simply write a term, formula, etc.
\begin{defn}
For a given language $\mathcal{L}$, an
$\mathcal{L}$-\emph{structure} $\mathcal{M}$ in \g logic $\mathfrak{G}_{V,A}$ is a
nonempty set M called the universe of $\mathcal{M}$ together with:
  \begin{enumerate}
  \item for any n-ary predicate symbol $P$ of $\mathcal{L}$, a function
  $P^{\mathcal{M}}:M^n\to V$,
  \item for any n-ary function symbol $f$ of $\mathcal{L}$, a function $f^{\mathcal{M}}:M^n\to M$,
  \item for any constant symbol c of $\mathcal{L}$, an element $c^{\mathcal{M}}$ in the universe of $\mathcal{M}$.
  \end{enumerate}
When the underlying language is clear, $\mathcal{M}$ is called a structure.
\end{defn}
For each $\alpha\in\mathcal{L}$, $\alpha^\mathcal{M}$ is called the
\emph{interpretation} of $\alpha$ in $\mathcal{M}$. The
interpretation of terms is defined as follows.
\begin{defn}
For every n-tuple variable $\bar{x}$ and every term $t(\bar{x})$,the interpretation of $t(\bar{x})$ in
$\mathcal{M}$ is a function $t^\mathcal{M}:M^n\to M$ such that
\begin{enumerate}
\item if $t(\bar{x})=x_i$ for $1\le i\le n$, then $t^\mathcal{M}(\bar{a})=a_i$,
\item if $t(\bar{x})=c$ then $t^\mathcal{M}(\bar{a})=c^{\mathcal{M}}$,
\item if $t(\bar{x})=f(t_1(\bar{x}),...,t_n(\bar{x}))$ then $t^\mathcal{M}(\bar{a}) = f^\mathcal{M}(t_1^\mathcal{M}(\bar{a}),...,t_n^\mathcal{M}(\bar{a}))$.
\end{enumerate}
\end{defn}
Considering $0$ as the absolute truth makes some changes in some semantical issues.
For example, the interpretation of $\varphi\wedge\psi$ in a structure is absolutely true whenever
the interpretation of both of them are absolutely true, i.e, the maximum of their interpretations
must be absolutely true. The interpretation of formulas is defined as follows.
\begin{defn}
The interpretation of a formula $\varphi(\bar{x})$ in an $\mathcal{L}$-structure $\mathcal{M}$ in
the \g logic $\mathfrak{G}_{V,A}$ ($\mathfrak{G}^\Delta_{V,A}$) is a function $\varphi^\mathcal{M}:M^n\to V$ which is
inductively determined as follows.
\begin{enumerate}
\item $\bot^\mathcal{M}=1$, and for each $r\in A\cup\{0\}$, $\bar{r}^\mathcal{M}=r$.
\item For every n-ary predicate symbol $P$, $P^\mathcal{M}(t_1(\bar{a}),...,t_n(\bar{a}))=P^\mathcal{M} (t_1^\mathcal{M}(\bar{a}),...,t_n^\mathcal{M}(\bar{a}))$.
\item $(\varphi \wedge\psi)^\mathcal{M}(\bar{a}) =\max\{\varphi^\mathcal{M}(\bar{a}),\psi^\mathcal{M}(\bar{a})\}$.
\item $(\varphi \to \psi)^\mathcal{M}(\bar{a}) =\varphi^\mathcal{M}(\bar{a})\dotto\psi^\mathcal{M}(\bar{a})$, where
$x\dotto y=\left\{\begin{array}{cc}
0&x\ge y,\\
y&x<y.
\end{array}\right.$
\item If $\varphi(\bar{x})=\forall y\ \psi(y,\bar{x})$ then $\varphi^\mathcal{M}(\bar{a})=\displaystyle\sup_{b\in M}\{\psi^\mathcal{M}(b,\bar{a})\}$.
\item If $\varphi(\bar{x})=\exists y\ \psi(y,\bar{x})$ then $\varphi^\mathcal{M}(\bar{a})=\displaystyle\inf_{b\in M}\{\psi^\mathcal{M}(b,\bar{a})\}$.
\item (Only for $\mathfrak{G}^\Delta_{V,A}$) $(\Delta(\varphi))^\mathcal{M}(\bar{a})=\left\{
\begin{array}{ll}
0&\varphi^\mathcal{M}(\bar{a})=0,\\
1&otherwise.
\end{array}\right.$
\end{enumerate}
\end{defn}
Observe that since $V$ is a closed subset of $[0,1]$, all infima and suprema exist. One can consider an
abbreviation for compound connectives $\neg, \vee, \Rightarrow$ and $\leftrightarrow$.
\begin{itemize}
  \item $\neg\varphi:=\varphi\to\bot$.
  \item $\varphi\vee\psi:=((\varphi\to\psi)\to\psi)\wedge((\psi\to\varphi)\to\varphi)$.
  \item $\varphi\Rightarrow\psi:= (\psi\to\varphi)\to\psi$.
  \item $\varphi\leftrightarrow\psi:=(\varphi\to\psi)\wedge(\psi\to\varphi)$.
\end{itemize}
The interpretations of formulas including these new connectives can be computed as follows.
\begin{itemize}
  \item $\neg\varphi^\mathcal{M}(\bar{a})=\left\{\begin{array}{cc}
0&\varphi^\mathcal{M}(\bar{a})=1,\\
1&\varphi^\mathcal{M}(\bar{a})<1.
\end{array}\right.$
  \item $(\varphi \vee \psi)^\mathcal{M}(\bar{a})=\min\{\varphi^\mathcal{M}(\bar{a}),\psi^\mathcal{M}(\bar{a})\}$.
  \item $(\varphi \Rightarrow \psi)^\mathcal{M}(\bar{a}) =\left\{\begin{array}{cc}
0&\varphi^\mathcal{M}(\bar{a})>\psi^\mathcal{M}(\bar{a})>0,\\
\psi^\mathcal{M}(\bar{a})&\varphi^\mathcal{M}(\bar{a})\le\psi^\mathcal{M}(\bar{a}).
\end{array}\right.$
  \item $(\varphi \leftrightarrow \psi)^\mathcal{M}(\bar{a}) =
d_{max}(\varphi^\mathcal{M}(\bar{a}),\psi^\mathcal{M}(\bar{a}))$, where
$d_{max}(x,y)=\left\{\begin{array}{cc}
0&x=y,\\
\max\{x,y\}&x\ne y.
\end{array}\right.$
\end{itemize}
\begin{defn}
Let $\varphi(\bar{x})$ be
an $\mathcal{L}$-formula and $T$ be an $\mathcal{L}$-theory.
\begin{enumerate}
\item An $\mathcal{L}$-structure $\mathcal{M}$ is called a model of $\varphi(\bar{x})$,
if there is $\bar{a}\in M^n$ such that $\varphi^\mathcal{M}(\bar{a})=0$. In such a case, we write $\mathcal{M}\models\varphi(\bar{a})$.
\item $\varphi(\bar{x})$ is called a satisfiable formula if there is an
$\mathcal{L}$-structure $\mathcal{M}$ which models $\varphi(\bar{x})$.
\item If an $\mathcal{L}$-structure $\mathcal{M}$ models all sentences of $T$,
we call $T$ a \emph{satisfiable} theory an write $\mathcal{M}\models T$.
\item $T$ is called finitely satisfiable if every finite subset of $T$ has a model.
\item For an $\mathcal{L}$-sentence $\varphi$ we say that $T$ \emph{entails} $\varphi$,
$T\models\varphi$, if every model of $T$ models $\varphi$. We write $T\model{f}{}\varphi$ if there exists
a finite subset $S$ of $T$ so that $S\models\varphi$. If there is no finite subset
$S$ of $T$ such that $S\models\varphi$, we write $T\nmodel{f}{}\varphi$. We use $\models\varphi$
instead of $\emptyset\models\varphi$.
\end{enumerate}
\end{defn}
For any \g set $V$ and $A\subseteq V$, the axioms of the \g logic
$\mathfrak{G}_{V,A}$ are the axioms of first-order \g logic \cite{hajek98} together with
the book-keeping axioms listed in Table \ref{axioms}.
\begin{table}[ht]
\caption{Axioms and Rules of $\mathfrak{G}_{V,A}$}
\label{axioms}
\centering
\begin{tabular}{l l}
\toprule
\rowcolor[gray]{.8}
Axioms of fist-order \g logic&\\
\midrule
(G1) $(\varphi \rightarrow \psi)\rightarrow ((\psi\rightarrow \chi)\rightarrow(\varphi\rightarrow
\chi))$&\\
(G2) $(\varphi\wedge\psi)\rightarrow \varphi$&\\
(G3) $(\varphi\wedge\psi)\rightarrow(\psi\wedge\varphi)$&\\
(G4) $\varphi\rightarrow(\varphi\wedge\varphi)$&\\
(G5) $(\varphi\rightarrow(\psi\rightarrow\chi))\leftrightarrow((\varphi\wedge\psi)\rightarrow
\chi)$&\\
(G6) $((\varphi\rightarrow\psi)\rightarrow\chi)\rightarrow(((\psi\rightarrow\varphi)\rightarrow\chi)\rightarrow\chi)$&\\
(G7) $\bar{1}\rightarrow\varphi$&\\
(G$\forall$1) $(\forall x\,\varphi(x))\to\varphi(t)$ &($t$ substitutable for
$ x$ in $\varphi(x)$)\\
(G$\forall$2) $\big(\forall x\,(\psi\to\varphi(x))\big)\to\big(\psi\to(\forall
x\,\varphi(x))\big)$ &($x$ not free in $\psi$)\\
(G$\forall$3) $\big(\forall x\,(\psi\vee\varphi(x))\big)\to\big(\psi\vee(\forall
x\,\varphi(x))\big)$ &($x$ not free in $\psi$)\\
(G$\exists$1) $\varphi(t)\to(\exists x\,\varphi(x))$ &($t$ substitutable for $ x$ in $\varphi(x)$)\\
(G$\exists$2) $\big(\forall x\,(\varphi(x)\to\psi)\big)\to\big((\exists x\,\varphi(x))
\to\psi\big)$   &($x$ not free in $\psi$)\\
\midrule
\rowcolor[gray]{.8}
Book-keeping axioms for nullary connectives& \\
\midrule
(RG1) $\bar{r}\wedge\bar{s}\leftrightarrow\overline{\max\{r,s\}}$&\\
(RG2(a)) $\bar{r}\to\bar{s}$ &(for $r\ge s$)\\
(RG2(b)) $(\bar{r}\to\bar{s})\leftrightarrow\bar{s}$ &(for $r<s$)\\
(RG3) $\neg\neg\bar{r}$ &(for $r<1$)\\
\midrule
\rowcolor[gray]{.8}
Rules&\\
\midrule
(Mp) $\varphi, (\varphi\to\psi)\vdash\psi$&\\
(Gen) $\varphi\vdash\forall x\,\varphi$&\\
\bottomrule
\end{tabular}
\end{table}
\begin{defn}\label{proof}
An $\mathcal{L}$-sentence $\varphi$ is proved by an $\mathcal{L}$-theory $T$, $T\vdash\varphi$, whenever there
is a finite sequence $\{\varphi_i\}_{i=1}^n$ of $\mathcal{L}$-sentences such that:
\begin{itemize}
\item for each $1\le i\le n$ either $\varphi_i\in T$ or $\varphi_i$ is an axiom or it is
followed by rules from axioms and other $\varphi_j$'s for $1\le j< i$.
\item $\varphi_n=\varphi$.
\end{itemize}
We write $\vdash\varphi$ whenever $\emptyset\vdash\varphi$. $T$ is called a consistent theory if $T\nvdash\bot$.
\end{defn}
Note that if $A\ne\emptyset$ then for any $0<r<1$, $T=\{\bar{r}\}$ is a consistent theory
in $\mathfrak{G}_{V,A}$. However, $T$ is not a satisfiable theory. In the next section we
introduced the notion of strongly consistency which is equivalent to the notion of
satisfiability in some extensions of \g logics. The deduction theorem follows easily.
\begin{thm}
In the \g logic $\mathfrak{G}_{V,A}$,
for an $\mathcal{L}$-theory $T$ and $\mathcal{L}$-sentences $\varphi$ and $\psi$,
\begin{center}
$T\cup\{\varphi\}\vdash\psi$ if and only if $T\vdash\varphi\to\psi$.
\end{center}
\end{thm}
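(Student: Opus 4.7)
The plan is to handle the two directions separately, with the right-to-left direction essentially immediate and the left-to-right direction proved by induction on the length of a proof.

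For the direction $T \vdash \varphi \to \psi \;\Rightarrow\; T \cup \{\varphi\} \vdash \psi$, I would simply observe that any proof of $\varphi \to \psi$ from $T$ is still a proof from $T \cup \{\varphi\}$; appending $\varphi$ (an assumption) and $\psi$ (one application of Mp) produces a proof of $\psi$ from $T \cup \{\varphi\}$.

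For the nontrivial direction, suppose $\{\varphi_1, \dots, \varphi_n\}$ witnesses $T \cup \{\varphi\} \vdash \psi$, with $\varphi_n = \psi$. I would prove by induction on $i \le n$ that $T \vdash \varphi \to \varphi_i$. The base cases split into three easy subcases: (i) if $\varphi_i$ is an axiom or an element of $T$, then $T \vdash \varphi_i$, and combining with (G7) and the theorem $\varphi_i \to (\varphi \to \varphi_i)$ (derivable from (G1), (G2), (G5)) gives $T \vdash \varphi \to \varphi_i$; (ii) if $\varphi_i = \varphi$, we need the theorem $\vdash \varphi \to \varphi$, which follows from (G4), (G2), and (G1). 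The two inductive cases are Mp and Gen. For Mp, some $\varphi_j$ and $\varphi_k = \varphi_j \to \varphi_i$ precede $\varphi_i$; the induction hypothesis gives $T \vdash \varphi \to \varphi_j$ and $T \vdash \varphi \to (\varphi_j \to \varphi_i)$, and I would combine these by applying (G5) to obtain $T \vdash (\varphi \wedge \varphi_j) \to \varphi_i$, then use (G4) and monotonicity of $\wedge$ on the right (a short derivation from (G2), (G3), (G5)) to conclude $T \vdash \varphi \to \varphi_i$. For Gen, $\varphi_i = \forall x\, \varphi_j$; since $\varphi$ is an $\mathcal{L}$-sentence, $x$ is not free in $\varphi$, and from the induction hypothesis $T \vdash \varphi \to \varphi_j$ I would apply (Gen) to get $T \vdash \forall x(\varphi \to \varphi_j)$ and then (G$\forall$2) to conclude $T \vdash \varphi \to \forall x\,\varphi_j$.

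The main obstacle is really the Mp step, because Gödel logic is weaker than classical logic and one cannot just invoke $(\varphi \to (\psi \to \chi)) \to ((\varphi \to \psi) \to (\varphi \to \chi))$ as an axiom; the derivation of this schema (or the equivalent conjunction-based reasoning via (G4) and (G5)) from the given axioms is the one technical routine that the proof rests on. Once that derived theorem is in hand, the induction closes cleanly and the restriction to $\varphi$ being a sentence cleanly disposes of the eigenvariable issue in the Gen case.
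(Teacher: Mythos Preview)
The paper does not actually give a proof of this theorem: it simply remarks ``The deduction theorem follows easily'' and then states the result with no argument. Your proposal supplies exactly the standard Hilbert-style induction on proof length that one would expect to fill in here, and it is correct. The key points you identify---residuation via (G5) to reduce the Mp case to a conjunction, idempotence via (G4) to discharge the doubled $\varphi$, and the sentential hypothesis on $\varphi$ to handle the (Gen) case through (G$\forall$2)---are precisely what makes the deduction theorem go through in G\"odel logic, so there is nothing to compare and no gap to flag.
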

Obviously if $T\vdash\varphi$, then $T\models\varphi$ and also $T\model{f}{}\varphi$.
In spite of first-order logic, the concept of proof does not coincide completely with the concept of
finitely entailment in \g logics enriched by nullary connectives.
\begin{exa}\label{fail strong completeness} Let $A\ne\emptyset$.
One could easily verify that in the \g logic $\mathfrak{G}_{[0,1],A}$  if $\mathcal{L}=\{\rho\}$
where $\rho$ is a nullary predicate symbol and $r\in A\setminus\{0,1\}$,
then $\neg\neg \rho\to\bar{r}\models\neg \rho$ while
$\neg\neg \rho\to\bar{r}\nvdash\neg \rho$.
\end{exa}
\begin{rem}
For \g logics enriched by $\Delta$ connective, there are some additional axioms and rules.
\begin{itemize}
\item[$\Delta 1$)] $\Delta\varphi\vee\neg\Delta\varphi$.
\item[$\Delta 2$)] $\Delta(\varphi\vee\psi)\to(\Delta\varphi\vee\Delta\psi)$.
\item[$\Delta 3$)] $\Delta\varphi\to\varphi$.
\item[$\Delta 4$)] $\Delta\varphi\to\Delta\Delta\varphi$.
\item[$\Delta 5$)] $\Delta(\varphi\to\psi)\to(\Delta\varphi\to\Delta\psi)$.
\item[$\Delta$R)] $\varphi\vdash\Delta\varphi$.
\end{itemize}
\end{rem}
\begin{defn}
A \g logic has {\emph complete recursive axiomatization} whenever $\models\varphi$
if and only if $\vdash\varphi$ for every $\mathcal{L}$-sentence $\varphi$.
This kind of completeness is sometimes called {\emph weak completeness}.
\end{defn}
\begin{defn}
A \g logic is said to have the {\emph strong completeness} whenever for every $\mathcal{L}$-theory $T$
and $\mathcal{L}$-sentence $\varphi$, $T\models\varphi$
if and only if $T\vdash\varphi$.
\end{defn}
First-order \g logic $\mathfrak{G}_\mathbb{R}$ admits both kinds of completeness with respect to
any countable first-order language \cite{hajek98}. When $A\ne\emptyset$ example
\ref{fail strong completeness} shows that the strong completeness fails in
$\mathfrak{G}_{[0,1],A}$ while it is shown that $\mathfrak{G}_{[0,1],A}$ is
completely recursive axiomatizable \cite{esteva2009}.

One of the most useful tools in model theory of classical first-order logic
is the compactness theorem. In the case of mathematical fuzzy logic
this theorem has different aspects.
\begin{defn}
A \g logic is said to enjoy the
\emph{entailment compactness} whenever for any theory $T$ and sentence $\varphi$,
\begin{center}
$T\models\varphi$ if and only if $T\model{f}{}\varphi$.
\end{center}
\end{defn}
\begin{defn}
The \g logic $\mathfrak{G}_{V,A}$ ($\mathfrak{G}^\Delta_{V,A}$) has the \emph{approximate entailment compactness} property if for
every theory $T$ and sentence $\varphi$,
\begin{center}
$T\models\varphi$ if and only $T\models\bar{r}\to\varphi$ for all $r\in A\cup\{1\}$.
\end{center}
\end{defn}
\begin{defn}
We say that a \g logic has the \emph{compactness} property if for
every theory $T$,
\begin{center}
$T$ is satisfiable if and only if $T$ is finitely satisfiable.
\end{center}
\end{defn}
Since a proof is a finite sequence of conclusions, we have the following theorem.
\begin{thm}
If a logic admits the strong completeness then
it enjoys the triple kinds of compactness mentioned above.
\end{thm}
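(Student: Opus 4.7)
The plan is to derive each of the three compactness properties from strong completeness together with the key observation (from Definition~\ref{proof}) that a derivation is a finite sequence and hence uses only finitely many sentences of $T$.

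I would first handle \emph{entailment compactness}; the other two will follow quickly from it. The easy direction is immediate from the definition of $\models$. For the nontrivial direction, $T \models \varphi$ gives $T \vdash \varphi$ by strong completeness; the derivation mentions only finitely many sentences of $T$, yielding a finite $S \subseteq T$ with $S \vdash \varphi$, and strong completeness applied to $S$ returns $S \models \varphi$. Thus $T$ finitely entails $\varphi$.

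Next I would derive \emph{compactness} by specializing entailment compactness to $\bot$. The forward direction is trivial. For the converse, assume $T$ is finitely satisfiable but has no model; then vacuously $T \models \bot$, so entailment compactness produces a finite $S \subseteq T$ with $S \models \bot$. But any model $\mathcal{M}$ of $S$ (provided by finite satisfiability) would have $\bot^\mathcal{M} = 0$, contradicting $\bot^\mathcal{M} = 1$ in every structure.

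For \emph{approximate entailment compactness}, the direction $T \models \varphi \implies T \models \bar{r} \to \varphi$ for every $r$ is immediate from the semantics, since $\varphi^\mathcal{M} = 0$ forces $(\bar{r} \to \varphi)^\mathcal{M} = 0$. The converse is the delicate step: one must infer $\varphi^\mathcal{M} = 0$ from $\varphi^\mathcal{M} \leq r$ for every $r \in A \cup \{1\}$. This is the main obstacle, and my expectation is that it reduces to the structural fact that $A \cup \{1\}$ has $0$ as an accumulation point in $V$ for the G\"odel logics under consideration; once that is granted, the argument parallels the entailment-compactness proof, applied to each $\bar{r} \to \varphi$ in turn.
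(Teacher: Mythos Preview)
Your treatment of entailment compactness and ordinary compactness is correct and is exactly the paper's approach: the paper's entire proof is the single sentence ``Since a proof is a finite sequence of conclusions, we have the following theorem,'' which is precisely the finiteness-of-derivations observation you spell out, and the specialization to $\bot$ for compactness is the intended reading.

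Your caution about approximate entailment compactness is well-placed, but the diagnosis can be sharpened. As defined in the paper, this property is purely semantic: strong completeness plays no role in either direction. The forward implication $T\models\varphi\Rightarrow T\models\bar r\to\varphi$ holds in every structure. The converse amounts to showing that $\varphi^{\mathcal{M}}\le r$ for every $r\in A\cup\{1\}$ forces $\varphi^{\mathcal{M}}=0$, which is exactly the condition $\inf(A\cup\{1\})=0$. So you are right that an additional structural hypothesis on $A$ is needed, and the paper's theorem is stated loosely on this point (note that after the theorem the paper singles out only entailment compactness and compactness for $\mathfrak{G}_{\mathbb{R}}$). The resolution, however, is not to ``parallel the entailment-compactness proof'': once $\inf A=0$ is granted, the converse is a one-line semantic computation, and neither strong completeness nor finiteness of proofs enters.
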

Specially, in the \g logic $\mathfrak{G}_\mathbb{R}$ both entailment compactness and compactness
hold.
\begin{thm} \cite{preining2003complete}
The entailment compactness and complete recursive axiomatization
(weak completeness) are equivalent in \g logic $\mathfrak{G}_V$.
\end{thm}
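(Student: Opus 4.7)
The plan is to exploit the deduction theorem and the fact that, under the reverse semantics, $\wedge$ is interpreted as $\max$: for any finite $S=\{\psi_1,\dots,\psi_k\}\subseteq T$, a structure $\mathcal{M}$ satisfies $S$ iff $(\psi_1\wedge\cdots\wedge\psi_k)^{\mathcal{M}}=0$, so $\models (\wedge S)\to\varphi$ implies $S\models\varphi$. This reformulates entailment compactness as (essentially) the property that $T\models\varphi$ yields $\models (\wedge S)\to\varphi$ for some finite $S\subseteq T$, bringing both sides of the claimed equivalence onto the common terrain of validity of single sentences.

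For the direction entailment compactness $\Rightarrow$ weak completeness, soundness already gives $\vdash\varphi\Rightarrow\models\varphi$, so only $\models\varphi\Rightarrow\vdash\varphi$ needs work. The idea is that entailment compactness renders the semantic consequence relation finitary; combining this finitarity with the finitary, sound Hilbert calculus and the deduction theorem reduces validity to provability by pushing $\models\varphi$ through a finite deduction-theoretic layer, where a complete axiomatization of the finite/propositional fragment of $\mathfrak{G}_V$ is available.

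For the reverse direction, assume $\models\psi\Leftrightarrow\vdash\psi$ for every sentence $\psi$, and suppose $T\models\varphi$. Aiming at a contradiction, assume no finite $S\subseteq T$ entails $\varphi$; then for each such $S$ there exists a structure $\mathcal{M}_S$ with $\mathcal{M}_S\models S$ and $\varphi^{\mathcal{M}_S}>0$. The plan is to combine the $\mathcal{M}_S$ — via an ultraproduct-style construction tailored to $\mathfrak{G}_V$ — into a single structure $\mathcal{M}$ with $\mathcal{M}\models T$ and $\varphi^{\mathcal{M}}>0$, contradicting $T\models\varphi$.

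The main obstacle is precisely this combination step: ordinary ultraproducts can misbehave in $\mathfrak{G}_V$ because the G\"odel implication is discontinuous with respect to the order topology on $V$. The crucial observation is that weak completeness of $\mathfrak{G}_V$ already forces $V$ to satisfy a specific order-theoretic condition (as characterized by Preining), and this very same condition on $V$ is exactly what makes the ultraproduct-like construction above succeed. Consequently both properties are equivalent to one common condition on $V$, and hence to each other.
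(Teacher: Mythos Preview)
The paper does not give its own proof of this theorem: it is stated with a citation to Preining and no proof environment follows. So there is nothing in the present paper to compare your argument against; the result, together with the explicit order-theoretic characterization of the Gödel sets $V$ for which it holds, lives entirely in \cite{preining2003complete}.

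That said, your sketch has real gaps. For the direction ``entailment compactness $\Rightarrow$ weak completeness'' your argument is empty: weak completeness concerns the case $T=\emptyset$, and entailment compactness applied to $\emptyset\models\varphi$ just returns $\emptyset\models\varphi$. Nothing finitary is gained, and your appeal to a ``complete axiomatization of the finite/propositional fragment'' is not connected to the hypothesis. Note also that the Hilbert calculus $\vdash$ here is the fixed first-order G\"odel calculus, independent of $V$; weak completeness of $\mathfrak{G}_V$ is therefore the statement that $\models_V\varphi\Leftrightarrow\models_{[0,1]}\varphi$, which is a claim about the order type of $V$, not about finitarity of consequence.

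Your final paragraph is where the actual content should be, but it is circular as written: you invoke ``the order-theoretic condition characterized by Preining'' to conclude that both properties coincide, yet that characterization \emph{is} the substance of the cited theorem. If you want a self-contained argument you must actually carry out the two characterizations---show directly that weak completeness of $\mathfrak{G}_V$ forces $V$ to be finite, or to have $1$ in its perfect kernel, or to have nonempty perfect kernel with $1$ isolated, and separately that entailment compactness forces the same---rather than pointing at Preining's result to prove Preining's result.
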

Furthermore, Prening \cite{preining2003complete} shows the \g logic
$\mathfrak{G}_V$ admits the entailment compactness property
if and only if either $V$ is a finite \g set or
the perfect kernel of $V$ includes $1$ or the perfect kernel of $V$ is
nonempty and $1$ is an isolated point of $V$. Particularly, he shows
that the entailment compactness fails in $\mathfrak{G}_V$ for countable \g set $V$.

Later, Pourmahdian et al. \cite{pourmahtavana2012} show that
if $V$ is a finite \g set or the perfect kernel of $V$ includes $1$ or $1$ is an isolated
point of $V$ then $\mathfrak{G}_V$ admits the compactness property.
\section{Compactness in \g logic $\mathfrak{G}_{[0,1],A}$}
In this section, we study the compactness property of \g logics $\mathfrak{G}_{[0,1],A}$ and also
$\mathfrak{G}^\Delta_{[0,1],A}$. From now on assume that $A'$ is denoted for the set of limit points of $A$ in a \g set $V$
with respect to the order topology on $V$. Firstly, note that if $A$ has a limit point $a\ne 0$ with respect to
the order topology and $a\in A\cup\{1\}$, then the compactness fails in $\mathfrak{G}_{[0,1],A}$ as well as
$\mathfrak{G}^\Delta_{[0,1],A}$.
\begin{exa}\label{limit}
Let $a\in A'\cap(A\cup\{1\})$ and assume that $\mathcal{L}=\{\rho\}$ where $\rho$ is a nullary predicate symbol.
Suppose that $\{r_i\}_{i=1}^\infty\subseteq A\setminus\{a\}$ is an increasing (decreasing)
sequence whose limit in $V$ is $a$. Let $T=\{\overline{a}\Rightarrow\rho\}\cup\{\rho\to\overline{r_i}\}_{i=1}^\infty$
($T=\{\rho\Rightarrow\overline{a}\}\cup\{\overline{r_i}\to\rho\}_{i=1}^\infty$).
Obviously $T$ is finitely satisfiable, but it is not satisfiable.
\end{exa}
\begin{exa}\label{limit11}
Let $a\in A'$ but $a\notin A\cup\{1\}$. Also assume that there is an increasing sequence
$\{r_i\}_{i=1}^\infty\subseteq A$ and a decreasing sequence $\{s_i\}_{i=1}^\infty\subseteq A$ so that
$\lim_i r_i=a=\lim_i s_i$. Let $\mathcal{L}=\{\rho,R(x)\}$ where $\rho$ is a nullary predicate symbol and
$R(x)$ is a unary predicate symbol. Let
$T=\{\exists x\,\big((\bar{r}_{i+1}\to R(x))\wedge(R(x)\to \bar{r}_i)\big)\}_{i=1}^\infty
\cup\{\bar{s}_i\Rightarrow\big(\forall x\,R(x)\big)\}_{i=1}^\infty\cup
\{\big(\forall x\,R(x)\big)\Rightarrow\rho\}\cup\{\rho\to\overline{r_i}\}_{i=1}^\infty$.
$T$ is finitely satisfiable, but it is not satisfiable. Indeed if $\mathcal{M}\models T$, then
$\big(\forall x\,R(x)\big)^\mathcal{M}=a$ and so the interpretation of $\rho$ in $\mathcal{M}$
makes no sense.
\end{exa}
Specially, $RGL$ does not admit the compactness property. However, if one consider some
non-standard truth value set, the compactness may hold on $RGL$. \cite{khatami2013rational}
prove that the compactness property is hold on $RGL$ within a semantic on the
non-standard truth value set $I=[0,1]^2\setminus\{(0,r): r>0\}$.

Now, using the Henkin construction, we show in the case that the set of limit
points of $A$ is at most $\{0\}$, the \g logic $\mathfrak{G}_{[0,1],A}$ admits the compactness
property. Observe that this method is based on
constructing the \g algebra of equivalence classes of
formulas modulo a theory, and then embedding this
\g algebra into the unit interval $[0,1]$, where
the {\emph countability} of the language $\mathcal{L}$
is a prerequisite necessary assumption for existence of
such an embedding. In the next section, we prove the
compactness property for some extensions of \g logics in which
the requirement of such an assumption is not obligatory.
\begin{defn}
The \g algebra with respect to the \g logic $\mathfrak{G}_{V,A}$
is a bounded lattice
$\mathbb{D}=\langle D, \dota, \dotv, 0^\mathbb{D}, 1^\mathbb{D}\rangle$ together with a
binary operation $\dotto$ and for each $r\in A\setminus\{0,1\}$ an element $r^\mathbb{D}\in D$ such that:
\begin{enumerate}
\item $\dota$ is the join (lub) operator and $\dotv$ is the meet (glb) operator.
\item $\dota$ and $\dotto$ form an adjoint pair, i.e., for all
  $a, b, c\in D$,
  \begin{center}
  $a\dota b\ge_\mathbb{D} c$ iff $a\ge_\mathbb{D} b\dotto c$,
  \end{center}
where $a\ge_\mathbb{D} b$ if and only if $a\dota b=a$.
\item $D$ is pre-linear, i.e, for all $a, b\in D$,
  \begin{center}
  $(a\dotto b)\dotv(b\dotto a)=0^\mathbb{D}$.
  \end{center}
\item $r^\mathbb{D}\dota s^\mathbb{D}=\max\{r,s\}^\mathbb{D}$.
\item $r^\mathbb{D}\dotto s^\mathbb{D}=0^\mathbb{D}$ iff $r\ge s$.
\item $r^\mathbb{D}\dotto s^\mathbb{D}=s^\mathbb{D}$ iff $r<s$.
\item $0^\mathbb{D}<r^\mathbb{D}<1^\mathbb{D}$ for all $0<r<1$.
\end{enumerate}
The \g algebra with respect to the \g logic $\mathfrak{G}^\Delta_{V,A}$ is formed
by the corresponding \g algebra with respect to $\mathfrak{G}_{V,A}$, i.e.,
$\mathbb{D}=\langle D, \dota, \dotv, \dotto, \{r^\mathbb{D}: r\in A\cup\{0,1\}\}\rangle$
together with a unary operation $\delta^\mathbb{D}$ which acts as follows.
\begin{itemize}
\item [(8)] $\delta^\mathbb{D}(0^\mathbb{D})=0^\mathbb{D}$.
\item [(9)] $\delta^\mathbb{D}(a)=1^\mathbb{D}$ for all $a\in\mathbb{D}\setminus\{0^\mathbb{D}\}$.
\end{itemize}
\end{defn}
\begin{exa}
The standard \g algebra with respect to $\mathfrak{G}_{[0,1],A}$ is
$[0,1]_A=\langle [0,1], \max, \min, \dotto, \{r : r\in A\cup\{0,1\}\}\rangle$. The
standard \g algebra corresponding to $\mathfrak{G}^*_n$ and $\mathfrak{G}^*_\downarrow$
is denoted by $[0,1]^*_n$ and $[0,1]^*_\downarrow$, respectively.
\end{exa}
\begin{lem}\label{embeddig godel algebra} If $A'\subseteq\{0\}$, then any countable
linear ordered \g algebra $\mathbb{D}$ with respect to $\mathfrak{G}_{[0,1],A}$ can be
continuously embedded into the standard \g algebra $[0,1]_A$ (i.e. an embedding that preserves
all suprema and infima that exist in $\mathbb{D}$). Particulary, any countable
linear ordered \g algebra $\mathbb{D}$ with respect to $\mathfrak{G}^*_n$ and
$\mathfrak{G}^*_\downarrow$ can be continuously embedded into the standard \g algebra
$[0,1]^*_n$ and $[0,1]^*_\downarrow$, respectively.
\end{lem}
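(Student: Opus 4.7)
The plan is to reduce the problem to constructing a continuous order-embedding of $\mathbb{D}$ into $[0,1]$ respecting the constants. In any linearly ordered Gödel chain the operations $\dot\wedge$, $\dot\vee$ and $\dot\to$ are completely determined by the order together with $0^{\mathbb{D}}$ — explicitly $\dot\wedge$ is $\max$, $\dot\vee$ is $\min$, and $x\dot\to y$ equals $0^{\mathbb{D}}$ when $x\ge y$ and $y$ otherwise — so any order-embedding $h$ sending $0^{\mathbb{D}}\mapsto 0$, $1^{\mathbb{D}}\mapsto 1$, and $r^{\mathbb{D}}\mapsto r$ for every $r\in A$ is automatically a Gödel-algebra homomorphism. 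Continuity thus reduces to the order-theoretic requirement that $h$ preserve all existing suprema and infima.

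I would first pin down the images of the constants by setting $h(r^{\mathbb{D}}) = r$ for every $r\in A\cup\{0,1\}$. The hypothesis $A'\subseteq\{0\}$ is crucial here: it forces each $r\in A$ to be an isolated point of $A\cup\{0,1\}$ in $[0,1]$, so between any two consecutive constants there is a genuine open subinterval available to host the images of intermediate elements of $\mathbb{D}$. Next, enumerate the remaining elements $\{d_n\}_{n\in\mathbb{N}}$ of $\mathbb{D}$ and extend $h$ by a Cantor-style back-and-forth: at stage $n$, compute the sharpest lower bound $L_n$ and upper bound $U_n$ for $h(d_n)$ imposed by the already-placed elements (both non-constants and constants), and choose a rational in the open interval $(L_n,U_n)$, which is nonempty by the previous step. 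Interleaving a dense enumeration of rationals lying in each ``gap between consecutive constants'' forces $h(\mathbb{D})$ to be order-dense between any two comparable constants.

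The main obstacle is verifying that the construction preserves \emph{all} existing suprema and infima, not merely the order. Order-preservation already gives $\sup h(S) \le h(\sup S)$ whenever $\sup_{\mathbb{D}} S$ exists; the reverse inequality requires that no ``infinitesimal gap'' be left in the image. Two subtleties arise. First, if $\mathbb{D}$ contained infinitesimals, i.e.\ elements $d$ with $0^{\mathbb{D}}<d<r^{\mathbb{D}}$ for every $r\in A$, no embedding respecting the constants could exist since all such $d$ would have to be squeezed to $0$; one must therefore invoke the algebra axioms together with $A'\subseteq\{0\}$ to rule this out (or else absorb them into $0^{\mathbb{D}}$). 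Second, for sups and infs witnessed by limit sequences, the density interleaving above, combined with the countability of $\mathbb{D}$ (so that only countably many critical preservation conditions arise and each can be scheduled into the enumeration), is exactly what ensures $\sup h(S)=h(\sup S)$; the dual argument handles infima. The statements for $\mathfrak{G}^*_n$ and $\mathfrak{G}^*_\downarrow$ then follow as immediate special cases, since in each the corresponding $A$ satisfies $A'\subseteq\{0\}$.
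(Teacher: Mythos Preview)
Your reduction to an order-embedding preserving the constants, and your flagging of the infinitesimals obstruction, are both correct. The gap is in the continuity argument. You claim that interleaving a dense enumeration of rationals forces $h(\mathbb{D})$ to be dense in each interval between consecutive constants, but the back direction of a back-and-forth requires $\mathbb{D}$ itself to be dense: given a target rational you must exhibit a preimage in $\mathbb{D}$, and $\mathbb{D}$ may well contain consecutive elements. Without density of the image your sup/inf preservation argument breaks --- a forth-only construction can, for instance, send a strictly decreasing sequence with infimum $0^{\mathbb{D}}$ entirely above $\tfrac{1}{2}$. Your fallback of ``scheduling'' the countably many preservation conditions into the enumeration is plausible in outline but would need real work, since at any finite stage most of the relevant set $S$ is still unplaced.

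The paper avoids this by first passing to a countable \emph{dense} extension $\mathbb{D}'$ (H\'ajek's device: adjoin a copy of $(0,1)\cap\mathbb{Q}$ lexicographically above each $u$ that has no successor), so that $u\mapsto(u,0)$ is already a continuous embedding $\mathbb{D}\hookrightarrow\mathbb{D}'$; now back-and-forth is available. For finite $A$ this finishes the job directly. For $A'=\{0\}$ the paper does not run a single back-and-forth against infinitely many prescribed constants; instead it partitions $\mathbb{D}'$ by the norm $\|u\|=\inf\{r:u\le r^{\mathbb{D}'}\}$, embeds each block $(r_{i+1}^{\mathbb{D}'},r_i^{\mathbb{D}'}]$ separately into $(r_{i+1},r_i]$, and sends the class of elements below every constant to $0$ via a constant map $f_0$. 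That last step is exactly your infinitesimals worry: the algebra axioms do not exclude such elements, and if they are present $f_0$ is not injective. So the paper's statement is literally too strong in that case as well; in the intended application to Lindenbaum algebras of maximally strongly consistent theories, a separate lemma guarantees there are no infinitesimals.
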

\begin{proof}
As \cite[Lemma 5.3.1]{hajek98}, set $D'= D\times\{0\}\cup\bigcup
\{\{u\}\times((0,1)\cap\mathbb{Q}): u\text{ has no successor in D}\}$
which ordered lexicographically by induced ordering $\le_\mathbb{D}$ of $\mathbb{D}$.
By setting $r^{\mathbb{D}'}=(r,0)$ for each nullary connective $\bar{r}$, one can easily construct a countable densely
linearly ordered \g algebra $\mathbb{D}'$. Furthermore, the mapping $u\to(u,0)$ is a continuous embedding
from $\mathbb{D}$ into $\mathbb{D}'$ wherein the image of $r^\mathbb{D}$ is $r^{\mathbb{D}'}$.
There are two cases.
\begin{itemize}
\item [Case 1:] $A'=\emptyset$. Thus, $A$ is a finite set. So, the proof is similar to
the proof for \g logic \cite[Lemma 5.3.2]{hajek98} with an easy
adaptation of the back and forth method for embedding countable densely linearly
ordered \g algebra $\mathbb{D}'$ into $[0,1]_A$.
\item [Case 2:] $A'=\{0\}$. So, there is a decreasing sequence $\{r_i\}_{i\in\mathbb{N}}$ in the
open unit interval $(0,1)$ so that $A=\{r_i\}_{i\in\mathbb{N}}$ and $\lim_i r_i=0$.
Let $\|u\|=\inf\{r: u\le_{_{\mathbb{D}'}} r^{\mathbb{D}'}\}$
for any $u\in D'$ .
Define the equivalence relation $\sim$ on $D'$ by
\begin{center}
$u\sim v$ if and only if $\|u\|=\|v\|$.
\end{center}
Now, we have
\begin{itemize}
\item $[0^{\mathbb{D'}}]_\sim=\{u\in D': u\le_{_{\mathbb{D'}}}r_i^{\mathbb{D}'} \text{ for all } i\in\mathbb{N}\}$,
\item if $\|u\|=r_i$ then $[u]_\sim=\{u\in D': r_{i+1}^{\mathbb{D}'}<_{_{\mathbb{D}'}}u
\le_{_{\mathbb{D'}}}r_i^{\mathbb{D}'}\}$,
\item $[1^{\mathbb{D'}}]_\sim=\{u\in D': r_1^{\mathbb{D}'}<_{_{\mathbb{D}'}}u
\le_{_{\mathbb{D'}}}1^{\mathbb{D}'}\}$.
\end{itemize}
For each $u\in D'$ if $\|u\|=r\in A$, obviously $[u]_\sim$ can be continuously embedded
into $(r_{i+1},r_i]$ by means of a function $f_r$. Also $[1^{\mathbb{D'}}]_\sim$ continuously embedded
into $(r_1,1]$ by a function like as $f_1$. Let $f_0$ be the trivial constant function from $[0^{\mathbb{D'}}]_\sim$ into $\{0\}$.
Now, the function $f=\cup\{f_r: r\in A\cup\{0,1\}\}$ fulfills the proof.
\end{itemize}
\end{proof}
\subsection{Usual Compactness}\hfill

As already mentioned, if $A\ne\emptyset$ then for any $0<r<1$, $T=\{\bar{r}\}$ is a consistent theory
in $\mathfrak{G}_{V,A}$ which is not satisfiable. So, when $A\ne\emptyset$ we use the
"strongly consistency" instead of "consistency".
\begin{defn} An $\mathcal{L}$-theory $T$ is called strongly consistent
if $T\nvdash\bar{r}$ for $r\in A\cup\{1\}$ (i.e, $r>0$).
\end{defn}
Observe that every satisfiable theory is strongly consistent.
We show that when $A'\subseteq\{0\}$, strongly consistent theories are satisfiable.
Note that Examples \ref{limit} and \ref{limit11} gives strongly consistent theories which are not satisfiable in
the \g logic $\mathfrak{G}_{[0,1],A}$ when $A'\nsubseteq\{0\}$.

Two different concepts "finitely entailment" and "proof" bring us two kinds
of Henkin and complete theories.
\begin{defn} Let $T$ be an $\mathcal{L}$-theory.
\begin{enumerate}
\item $T$ is Henkin if for every universal $\mathcal{L}$-formula $\forall x\ \varphi(x)$
that is not finitely entailed by $T$, there is a witness constant symbol $c$ in $\mathcal{L}$ such that
$T\nmodel{f}{}\varphi(c)$.
\item $T$ is deductively Henkin or d-Henkin if for every universal $\mathcal{L}$-formula $\forall x\ \varphi(x)$
that is not proved by $T$, there is a witness constant symbol $c$ in $\mathcal{L}$ such that
$T\nvdash\varphi(c)$.
\item $T$ is called a complete theory if for any pair of $\mathcal{L}$-sentences $(\varphi, \psi)$,
either $T\model{f}{}\varphi\to\psi$ or $T\model{f}{}\psi\to\varphi$.
\item $T$ is called deductively complete or d-complete theory if for any pair of
$\mathcal{L}$-sentences $(\varphi, \psi)$, either $T\vdash\varphi\to\psi$ or $T\vdash\psi\to\varphi$.
\end{enumerate}
\end{defn}
The following theorem leads to deduced the compactness property for the
\g logic $\mathfrak{G}_{[0,1],A}$ when $A'\subseteq\{0\}$.
\begin{thm}\label{compactness of dmaximal theories gdowwn}
Let $\mathcal{L}$ be a countable first-order language.
If $A'\subseteq\{0\}$ then every strongly consistent d-complete d-Henkin $\mathcal{L}$-theory in
$\mathfrak{G}_{[0,1],A}$ is satisfiable.
\end{thm}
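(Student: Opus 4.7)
The plan is to run a Henkin-style construction adapted to Gödel logic in the reversed (metrically oriented) semantics: form the Lindenbaum--Tarski algebra of $T$, continuously embed it into $[0,1]_A$ via Lemma \ref{embeddig godel algebra}, and then verify a Truth Lemma for a canonical term-structure.

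First, I would form the Lindenbaum algebra $\mathbb{D}$: on $Sent(\mathcal{L})$ set $\varphi \sim \psi$ iff $T \vdash \varphi \leftrightarrow \psi$, let $\mathbb{D} = Sent(\mathcal{L})/{\sim}$, and define $[\varphi] \dota [\psi] := [\varphi \wedge \psi]$, $[\varphi] \dotto [\psi] := [\varphi \to \psi]$ and $r^{\mathbb{D}} := [\bar{r}]_\sim$. Axioms (G1)--(G6) with the deduction theorem yield conditions (1)--(3) of the Gödel-algebra definition, and the book-keeping axioms (RG1)--(RG3) give (4)--(6); strong consistency ($T \nvdash \bar{r}$ for every $r \in A \cup \{1\}$) combined with (RG2) produces $0^{\mathbb{D}} <_{\mathbb{D}} r^{\mathbb{D}} <_{\mathbb{D}} 1^{\mathbb{D}}$ for $0 < r < 1$, giving (7); and d-completeness makes $\le_{\mathbb{D}}$ linear. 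Since $\mathcal{L}$ is countable, $\mathbb{D}$ is a countable linearly ordered Gödel algebra with respect to $\mathfrak{G}_{[0,1],A}$.

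Because $A' \subseteq \{0\}$, Lemma \ref{embeddig godel algebra} supplies a continuous embedding $f \colon \mathbb{D} \hookrightarrow [0,1]_A$. I then build the canonical structure $\mathcal{M}$ syntactically: the universe $M$ is the set of closed $\mathcal{L}$-terms, constants and function symbols are interpreted by term formation, and for each $n$-ary predicate $P$ set $P^{\mathcal{M}}(t_1,\ldots,t_n) := f([P(t_1,\ldots,t_n)]_\sim)$. The core is the Truth Lemma, $\varphi^{\mathcal{M}} = f([\varphi]_\sim)$ for every sentence $\varphi$, proved by induction. The atomic case is the definition, and the propositional steps use that $f$ is a homomorphism of Gödel algebras. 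The crucial case is the quantifier step, for which (by continuity of $f$) it suffices to show $[\forall y\,\psi(y)]_\sim = \sup_{t \in M}[\psi(t)]_\sim$ in $\mathbb{D}$. That $[\forall y\,\psi]_\sim$ is an upper bound is immediate from (G$\forall$1); for the least-upper-bound property, suppose $[\alpha] <_{\mathbb{D}} [\forall y\,\psi]_\sim$. Then $T \nvdash \forall y\,\psi \to \alpha$, and using the easily derived theorem $\forall y(\psi \to \alpha) \to (\forall y\,\psi \to \alpha)$ we get $T \nvdash \forall y(\psi \to \alpha)$; d-Henkin then produces a constant $c$ with $T \nvdash \psi(c) \to \alpha$, which by d-completeness forces $T \vdash \alpha \to \psi(c)$, i.e., $[\alpha] <_{\mathbb{D}} [\psi(c)]_\sim$, contradicting upper-boundedness. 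The existential case is dual via (G$\exists$2).

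Finally, for each $\varphi \in T$ we have $T \vdash \varphi$, so $[\varphi]_\sim$ coincides with the bottom element $0^{\mathbb{D}}$ of $\mathbb{D}$ (the class of $T$-provable sentences); hence $\varphi^{\mathcal{M}} = f(0^{\mathbb{D}}) = 0$ and $\mathcal{M} \models T$. The main obstacle I foresee is the quantifier step of the Truth Lemma: it relies essentially on three ingredients acting in concert---continuity of $f$ (to interchange $f$ with $\sup$ and $\inf$), d-Henkin (to convert non-entailment of a universal formula into non-entailment at some constant), and d-completeness (so that linearity of $\le_{\mathbb{D}}$ upgrades a non-entailment into a strict inequality in the algebra, closing the contradiction).
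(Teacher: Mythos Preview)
Your approach matches the paper's exactly: form the Lindenbaum algebra of $T$, continuously embed it into $[0,1]_A$ via Lemma~\ref{embeddig godel algebra}, and build the canonical term model (the paper is even terser than you and leaves the Truth Lemma implicit). One bookkeeping slip in your quantifier step: under the order convention here $[\varphi]\le_{\mathbb D}[\psi]$ iff $T\vdash\psi\to\varphi$ (so that (G$\forall$1) really makes $[\forall y\,\psi]$ an \emph{upper} bound), hence from $[\alpha]<_{\mathbb D}[\forall y\,\psi]$ you should get $T\nvdash\alpha\to\forall y\,\psi$, then apply (G$\forall$2) and d-Henkin to obtain $c$ with $T\nvdash\alpha\to\psi(c)$; with this orientation fix your argument goes through unchanged.
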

\begin{proof}
Let $T$ be a strongly consistent deductively complete d-Henkin $\mathcal{L}$-theory.
Also let $Lind(T)$ be the class of all $T$-provably equivalent $\mathcal{L}$-sentences, i.e., the equivalence
classes $[\varphi]_T$ of all $\mathcal{L}$-sentences $\varphi$ modulo to the following equivalence relation.
\begin{center}
$\varphi\sim\psi$ if and only if $T\vdash\varphi\leftrightarrow\psi$.
\end{center}
Define an ordering $\lesssim$ on $Lind(T)$ as follows
\begin{center}
$[\varphi]_T\lesssim[\psi]_T$ if and only if $T\vdash\psi\to\varphi$.
\end{center}
Because $T$ is a complete theory, $(Lind(T),\lesssim)$ is a linearly ordered set.
Now, we obtain a countable linearly ordered \g algebra $\mathbb{L}_T$ from $Lind(T)$ by setting,
\begin{center}
$[\varphi]_T\dotv[\psi]_T=[\varphi\vee\psi]_T$,\\
$[\varphi]_T\dota[\psi]_T=[\varphi\wedge\psi]_T$,\\
$[\varphi]_T\dotto[\psi]_T=[\varphi\to\psi]_T$,\\
$r^\mathbb{D}=[\bar{r}]_T$ for any nullary connective $\bar{r}$.
\end{center}
Axiom $RG3$ together with the strongly consistency of $T$ implies that
$[\bar{r}]_T\lnsim[\bar{s}]_T$ for each $r<s$ in $A$. Thus, by Lemma \ref{embeddig godel algebra}
there is an embedding $g$ from $\mathbb{L}_T$ into the standard \g algebra $[0,1]_A$
such that $[\bar{r}]_T$ mapped to $r$.

The canonical $\mathcal{L}$-structure $\mathcal{M}_T$ of $T$ is made as follows.
\begin{itemize}
\item[a)] The universe of $\mathcal{M}_T$ is the set of all closed $\mathcal{L}$-terms $CM(T)$.
\item[b)] For each n-ary function symbol $f$, define $f^{\mathcal{M}_T}:CM(T)^n\to CM(T)$
by
\begin{center}
$f^{\mathcal{M}_T}(t_1, ..., t_n)=f(t_1, ..., t_n)$.
\end{center}
\item[c)] For each n-ary predicate symbol $P$, define $P^{\mathcal{M}_T}:CM(T)^n\to[0,1]$
by
\begin{center}
$P^{\mathcal{M}_T}(t_1, ..., t_n)=g([P(t_1, ..., t_n)]_T)$.
\end{center}
\end{itemize}
\end{proof}
For the case that $A'=\emptyset$, the compactness property of $\mathfrak{G}_{[0,1],A}$ follows from
Theorem \ref{compactness of dmaximal theories gdowwn} and the following theorem.
\begin{thm}\label{compactness gn}
Let $A'=\emptyset$. Every strongly consistent
$\mathcal{L}$-theory in $\mathfrak{G}_{[0,1],A}$ is contained in
a strongly consistent d-complete d-Henkin $\mathcal{L}'$-theory
such that $\mathcal{L}\subseteq\mathcal{L}'$.
\end{thm}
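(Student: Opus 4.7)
The plan is a Henkin-style construction adapted to G\"odel logic with nullary truth-constants. Note that $A'=\emptyset$ together with $A\subseteq[0,1]$ forces $A$ to be finite (a subset of a compact space with no accumulation points must be finite), so the set $A\cup\{1\}$ of forbidden truth values is finite, which is crucial for the bookkeeping. I introduce a countable set $C=\{c_i:i\in\mathbb{N}\}$ of fresh constants, set $\mathcal{L}'=\mathcal{L}\cup C$, and fix enumerations of all ordered pairs of $\mathcal{L}'$-sentences $(\psi_n,\chi_n)_{n\in\mathbb{N}}$ and of all universal $\mathcal{L}'$-sentences $(\forall x_n\,\varphi_n(x_n))_{n\in\mathbb{N}}$. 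I then build by recursion an increasing chain $T=T_0\subseteq T_1\subseteq T_2\subseteq\cdots$ of strongly consistent $\mathcal{L}'$-theories, alternating between a d-completion step and a d-Henkin step, and set $T^{*}=\bigcup_n T_n$.

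At the d-completion stage handling $(\psi_n,\chi_n)$, the key claim is that at least one of $T_{2n}\cup\{\psi_n\to\chi_n\}$ and $T_{2n}\cup\{\chi_n\to\psi_n\}$ is strongly consistent. Suppose otherwise, so for some $s_1,s_2\in A\cup\{1\}$ the two extensions prove $\bar{s_1}$ and $\bar{s_2}$ respectively; let $s=\min\{s_1,s_2\}\in A\cup\{1\}$. Because $\bar{s_i}\to\bar{s}$ is an instance of the book-keeping axiom (RG2a), both extensions prove $\bar{s}$. By the deduction theorem, $T_{2n}\vdash(\psi_n\to\chi_n)\to\bar{s}$ and $T_{2n}\vdash(\chi_n\to\psi_n)\to\bar{s}$; instantiating axiom (G6) with outer conclusion $\bar{s}$ and inner $\varphi,\psi$ equal to $\psi_n,\chi_n$ then gives $T_{2n}\vdash\bar{s}$, contradicting strong consistency. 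Let $T_{2n+1}$ be whichever extension works.

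At the d-Henkin stage handling $\forall x_n\,\varphi_n(x_n)$, if $T_{2n+1}\vdash\forall x_n\,\varphi_n(x_n)$ I set $T_{2n+2}=T_{2n+1}$; otherwise I pick $c_n\in C$ not occurring in $T_{2n+1}$ or $\varphi_n$ and set $T_{2n+2}=T_{2n+1}\cup\{\varphi_n(c_n)\to\forall x_n\,\varphi_n(x_n)\}$. Strong consistency is preserved via the following chain: if $T_{2n+2}\vdash\bar{r}$, then by deduction and the standard generalization-on-a-fresh-constant metatheorem, $T_{2n+1}\vdash\forall y\,[(\varphi_n(y)\to\forall x_n\,\varphi_n(x_n))\to\bar{r}]$. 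Applying axiom (G$\exists 2$) (with $\psi=\bar{r}$, noting $y$ is not free in $\bar{r}$) yields $T_{2n+1}\vdash\exists y\,(\varphi_n(y)\to\forall x_n\,\varphi_n(x_n))\to\bar{r}$; and the tautology $\forall x_n\,\varphi_n(x_n)\to\exists z\,(\varphi_n(z)\to\forall x_n\,\varphi_n(x_n))$, obtained by combining $\chi\to(A\to\chi)$ (from (G2) via (G5)) with (G$\exists$1) and (G1), gives by transitivity $T_{2n+1}\vdash\forall x_n\,\varphi_n(x_n)\to\bar{r}$. A case analysis using the hypothesis $T_{2n+1}\nvdash\forall x_n\,\varphi_n(x_n)$ together with the book-keeping axioms (RG1)--(RG3) on the finitely many $\bar{r}$ then produces $T_{2n+1}\vdash\bar{s}$ for some $s\in A\cup\{1\}$, contradicting strong consistency of $T_{2n+1}$.

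Finally, $T^{*}$ is strongly consistent because any proof is finite and thus takes place inside some $T_n$; it is d-complete by construction; and d-Henkin is verified as follows. Given $\forall x_n\,\varphi_n(x_n)$ with $T^{*}\nvdash\forall x_n\,\varphi_n(x_n)$, monotonicity of $\vdash$ gives $T_{2n+1}\nvdash\forall x_n\,\varphi_n(x_n)$, so the witness axiom $\varphi_n(c_n)\to\forall x_n\,\varphi_n(x_n)$ belongs to $T^{*}$; if in addition $T^{*}\vdash\varphi_n(c_n)$, modus ponens produces $T^{*}\vdash\forall x_n\,\varphi_n(x_n)$, contradiction, so $c_n$ is a Henkin witness. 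The most delicate point is the preservation of strong consistency under the Henkin step: translating the intermediate conclusion $T_{2n+1}\vdash\forall x_n\,\varphi_n(x_n)\to\bar{r}$, together with $T_{2n+1}\nvdash\forall x_n\,\varphi_n(x_n)$, into an outright refutation $T_{2n+1}\vdash\bar{s}$ requires exploiting the finite linear structure of the truth-constants $\bar{r}$, and it is precisely here that the hypothesis $A'=\emptyset$ is essential.
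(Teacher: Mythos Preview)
Your d-completion step is fine, but the d-Henkin step is genuinely broken: in G\"odel logic the witness axiom $\varphi(c)\to\forall x\,\varphi(x)$ does \emph{not} preserve strong consistency, and the ``case analysis'' you gesture at in step~4 is not merely incomplete but false. Take $A=\emptyset$ (so strong consistency is ordinary consistency) and
\[
T=\bigl\{\neg\forall x\,P(x),\ \forall x\,\exists y\,\bigl((P(x)\to P(y))\to P(x)\bigr)\bigr\}.
\]
The second sentence forces, in every model, that no element attains $\sup_a P^{\mathcal M}(a)$, while the first forces this sup to equal $1$; the structure $M=\mathbb N$, $P^{\mathcal M}(n)=1-\tfrac{1}{n+1}$ is a model, so $T$ is consistent and $T\nvdash\forall x\,P(x)$. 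Yet $T\cup\{P(c)\to\forall x\,P(x)\}\vdash\bot$: from the two displayed hypotheses one gets $\neg P(c)$, hence $P(c)\to P(y)$ for every $y$, hence $\bigl((P(c)\to P(y))\to P(c)\bigr)\to\bot$ for every $y$; generalising and applying (G$\exists$2) yields $\exists y\bigl((P(c)\to P(y))\to P(c)\bigr)\to\bot$, and instantiating the second axiom of $T$ at $x=c$ gives $\bot$. So your $T_{2n+2}$ can be inconsistent even though $T_{2n+1}$ is not. Correspondingly, your step~4 claims that $T_{2n+1}\vdash\forall x\,\varphi\to\bar r$ together with $T_{2n+1}\nvdash\forall x\,\varphi$ yields $T_{2n+1}\vdash\bar s$; in the example $T\vdash\forall x\,P(x)\to\bot$ and $T\nvdash\forall x\,P(x)$, but $T\nvdash\bot$.

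This is exactly why the paper does \emph{not} maintain strong consistency stage by stage. Instead it carries along a single evolving forbidden sentence $\chi_n$ with $T_n\nvdash\chi_n$, initialised as the least element of $A\cup\{1\}$ (here $A'=\emptyset$ is used to ensure such a minimum exists). At the Henkin stage for $\varphi_i(x)$ one asks whether $T_n\nvdash\chi_n\vee\varphi_i(c_i)$: if so, one leaves $T$ alone and enlarges the forbidden sentence to $\chi_{n+1}=\chi_n\vee\varphi_i(c_i)$; if not, one uses (G$\forall$3) to conclude $T_n\vdash\chi_n\vee\forall x\,\varphi_i(x)$, and then adds $\chi_n\to\forall x\,\varphi_i(x)$ to $T$, which is safe by proof-by-cases. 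The point is that the Henkin witness is recorded in $\chi_n$, not in $T$, and the global d-Henkin property is recovered at the end from the chain of $\chi_n$'s. Your construction would need a device of this sort; the naive witness axiom cannot be made to work.
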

\begin{proof}
Let $T$ be a strongly consistent $\mathcal{L}$ theory and
$\mathcal{L}'$ be the extended of $\mathcal{L}$ with countably many new constant symbols.
Enumerate all pairs of $\mathcal{L}'$-sentences by $\{(\theta_i,\psi_i)\}_{i\in\mathbb{N}}$. Also assume
that $\{\varphi_i(x)\}_{i\in\mathbb{N}}$ be the set of all $\mathcal{L}'$-formulas with one free variable.
Now, we construct inductively sequences $\{T_n\}_{n\in\mathbb{N}}$ of $\mathcal{L}'$-theories and $\{\chi_{_n}\}_{n\in\mathbb{N}}$
of $\mathcal{L}'$-sentences such that for each $n\in\mathbb{N}$, $T_n\nvdash\chi_{_n}$.\\
\underline{stage 0:} Let $T_0=T$ and
\begin{center}
$\chi_{_0}=\left\{\begin{array}{cc}
1&A=\emptyset,\\
\min_{r\in A}\{r\}&otherwise.
\end{array}\right.$
\end{center}
Obviously, $\chi_{_0}>0$ and since $T$ is strongly consistent, $T_0\nvdash\chi_{_0}$.\\
\underline{stage n+1=2i:} Let $\chi_{_{n+1}}=\chi_{_n}$. Now, if $T_n\cup\{\theta_i\to\psi_i\}\nvdash\chi_{_n}$
set $T_{n+1}=T_n\cup\{\theta_i\to\psi_i\}$ and otherwise set $T_{n+1}=T_n\cup\{\psi_i\to\theta_i\}$. Since
$T_n\nvdash\chi_{_n}$, either $T_n\cup\{\theta_i\to\psi_i\}\nvdash\chi_{_n}$ or
$T_n\cup\{\psi_i\to\theta_i\}\nvdash\chi_{_n}$. Thus $T_{n+1}\nvdash\chi_{_{n+1}}$.\\
\underline{stage n+1=2i+1:} Let $c_i$ be a constant symbol of $\mathcal{L}'$ not occurring in $\varphi_i(x)$ and
the constructed objects until the current stage. Consider two cases.
\begin{quote}
\begin{itemize}
\item[Case 1:] If $T_n\nvdash\chi_{_n}\vee\varphi_i(c_i)$ let
$T_{n+1}=T_n$ and $\chi_{_{n+1}}=\chi_{_n}\vee\varphi_i(c_i)$.
Since $T_n\nvdash\chi_{_n}$, clearly in this case $T_{n+1}\nvdash\chi_{_{n+1}}$.
\item[Case 2:] If $T_n\vdash\chi_{_n}\vee\varphi_i(c_i)$ set
$T_{n+1}=T_n\cup\{\chi_{_n}\to\forall x\,\varphi_i(x)\}$ and $\chi_{_{n+1}}=\chi_{_n}$.
Since $T_n\vdash\chi_{_n}\vee\varphi_i(c_i)$ using (Gen) and (G$\forall$3) we have
$T_n\vdash\chi_{_n}\vee\forall x\,\varphi_i(x)$. So, by definition of the connective $\vee$ and the fact that
$T_n\nvdash\chi_{_n}$ we have $T_n\cup\{\forall x\,\varphi_i(x)\to\chi_{_n}\}\vdash\chi_{_n}$.
Thus, using the proof-by-case property and the fact that $T_n\nvdash\chi_{_n}$,
we have $T_n\cup\{\chi_{_n}\to\forall x\,\varphi_i(x)\}\nvdash\chi_{_n}$ that is $T_{n+1}\nvdash\chi_{_{n+1}}$.
\end{itemize}
\end{quote}
Now, let $T'=\cup_{n\in\mathbb{N}}T_n$. Clearly $T'$ is strongly consistent, since otherwise
if $T'\vdash\bar{r}$ for some $r\in A\cup\{1\}$ then by (RG2(a)) $T'\vdash\chi_{_0}$. So, for
some $n\in\mathbb{N}$, $T_n\vdash\chi_{_0}$ which implies that $T_n\vdash\chi_{_n}$, a contradiction.

On the other hand, clearly $T'$ is deductively complete. Now, if $T'\nvdash\forall x\,\varphi_i(x)$
then $T_{2i+1}\nvdash\chi_{_{2i+1}}\vee\varphi_i(c_i)$, since otherwise by case 2 of stage n+1 we have
$T_{2i+1}\vdash\chi_{_{2i+1}}\vee\forall x\,\varphi_i(x)$ which implies that
$T_{2i+2}\vdash\forall x\,\varphi_i(x)$, a contradiction. Thus, $T_{2i+2}=T_{2i+1}$ and
$\chi_{_{2i+2}}=\chi_{_{2i+1}}\vee\varphi_i(c_i)$. But then $T'\nvdash\varphi_i(c_i)$, since
otherwise $T'\vdash\chi_{_{2i+2}}$, a contradiction. So, $T'$ is a deductively complete
d-Henkin $\mathcal{L}'$-theory.
\end{proof}
\begin{cor}\label{final compactness gn}
For countable first-order language $\mathcal{L}$, the \g logic $\mathfrak{G}^*_n$ admit the compactness property.
\end{cor}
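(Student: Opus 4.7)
The plan is to combine Theorems \ref{compactness of dmaximal theories gdowwn} and \ref{compactness gn}. For $\mathfrak{G}^*_n$ we have $V=[0,1]$ and $A=\{r_1,\dots,r_n\}$ finite, hence $A'=\emptyset$; in particular both hypotheses ``$A'=\emptyset$'' and ``$A'\subseteq\{0\}$'' are satisfied, so both theorems are available. The trivial direction of compactness is immediate: if $\mathcal{M}\models T$, then $\mathcal{M}$ models every finite subset of $T$, so $T$ is finitely satisfiable.

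For the nontrivial direction, let $T$ be a finitely satisfiable $\mathcal{L}$-theory. I would first check that $T$ is strongly consistent. Suppose, toward a contradiction, that $T\vdash\bar{r}$ for some $r\in A\cup\{1\}$; then some finite $T_0\subseteq T$ already proves $\bar{r}$, so by soundness every model $\mathcal{N}$ of $T_0$ would satisfy $\bar{r}^\mathcal{N}=0$. But by the interpretation of nullary connectives we always have $\bar{r}^\mathcal{N}=r>0$, so $T_0$ would have no model at all, contradicting finite satisfiability. Hence $T$ is strongly consistent.

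Next, apply Theorem \ref{compactness gn} to embed $T$ into a strongly consistent d-complete d-Henkin $\mathcal{L}'$-theory $T'$, where $\mathcal{L}'$ adjoins countably many new constants to $\mathcal{L}$; since $\mathcal{L}$ is countable, so is $\mathcal{L}'$. Theorem \ref{compactness of dmaximal theories gdowwn} then supplies an $\mathcal{L}'$-structure $\mathcal{M}_{T'}\models T'$, and its $\mathcal{L}$-reduct is a model of $T$. I do not anticipate any serious obstacle: all the real work was carried out in the two preceding theorems, and the finite case $A'=\emptyset$ lies strictly inside both of their hypotheses. The only point worth verifying is that the language $\mathcal{L}'$ remains countable, which is explicit in the construction of Theorem \ref{compactness gn} and ensures that the embedding from Lemma \ref{embeddig godel algebra}, used in Theorem \ref{compactness of dmaximal theories gdowwn}, still applies.
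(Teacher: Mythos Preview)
Your proposal is correct and follows exactly the route the paper intends: the corollary is stated immediately after Theorem~\ref{compactness gn}, and the paper remarks just before that theorem that the compactness of $\mathfrak{G}_{[0,1],A}$ for $A'=\emptyset$ follows from Theorems~\ref{compactness of dmaximal theories gdowwn} and~\ref{compactness gn}. Your write-up simply spells out the implicit bridge step (finite satisfiability $\Rightarrow$ strong consistency via soundness and finiteness of proofs) and the passage to the $\mathcal{L}$-reduct, both of which are unproblematic.
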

\begin{rem}
By Example \ref{fail strong completeness} we know that the strong completeness fails in $\mathfrak{G}^*_n$.
Indeed, when $T\cup\{\varphi\}\subseteq Sent(\mathcal{L})$, $A\ne\emptyset$, and $T\nvdash\varphi$ one could
not obtain a deductively complete d-Henkin  extension $T'$ of $T$ such that $T'\nvdash\varphi$ in
the \g logic $\mathfrak{G}_{[0,1],A}$.
For example, the theory $T=\{\neg\neg\rho\to\bar{r}\}$ in Example \ref{fail strong completeness} could not be extend
to a deductively complete theory $T'$ such that $T'\nvdash\neg\rho$.
\end{rem}
The method used in Theorem \ref{compactness gn} could not be used for the
case that $A'=\{0\}$. To prove the compactness property of $\mathfrak{G}_{[0,1],A}$ for
the case that $A'=\{0\}$ we use the following lemma.
\begin{lem}\label{maximally finitely satisfiable}
Let $T$ be a maximally strongly consistent $\mathcal{L}$-theory and $\varphi$ and $\psi$ be
two arbitrary $\mathcal{L}$-sentences. For the \g logic $\mathfrak{G}_{[0,1],A}$ we have,
\begin{enumerate}
\item $T$ is deductively complete,
\item if $\varphi\vee\psi\in T$, then either $\varphi\in T$ or $\psi\in T$,
\item if $A'=\{0\}$ and $\bar{r}\to\varphi\in T$ for $r\in A\cup\{1\}$, then $\varphi\in T$.
\end{enumerate}
\end{lem}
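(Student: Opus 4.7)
The overall plan is to prove each item by contradiction, exploiting a single consequence of maximal strong consistency that I will call the \emph{extraction trick}: if $\chi$ is any sentence with $\chi\notin T$, then the strict superset $T\cup\{\chi\}$ cannot be strongly consistent, so there is some $r\in A\cup\{1\}$ with $T\cup\{\chi\}\vdash\bar r$, i.e.\ (by the deduction theorem) $T\vdash\chi\to\bar r$.

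For part~(1), assume neither $\varphi\to\psi$ nor $\psi\to\varphi$ is in $T$. The extraction trick produces $r,s\in A\cup\{1\}$ with $T\vdash(\varphi\to\psi)\to\bar r$ and $T\vdash(\psi\to\varphi)\to\bar s$. Let $u=\min\{r,s\}>0$; using axiom (RG2(a)) to weaken $\bar r\to\bar u$ and $\bar s\to\bar u$ (since $r,s\ge u$) and chaining by (G1), both implications can be pushed to a common right-hand side $\bar u$. Axiom (G6) instantiated with $\chi:=\bar u$, together with two applications of modus ponens, then yields $T\vdash\bar u$, contradicting strong consistency.

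Part~(2) runs the extraction trick on $\varphi$ and $\psi$ separately, and the same normalization gives $T\vdash\varphi\to\bar u$ and $T\vdash\psi\to\bar u$ for some common $u>0$. Here I would appeal to the proof-by-cases admissible rule of \g logic (the same rule used in the second stage of the construction in Theorem~\ref{compactness gn}), which converts these two implications into $T\vdash(\varphi\vee\psi)\to\bar u$; since $\varphi\vee\psi\in T$, modus ponens gives the forbidden $T\vdash\bar u$. For part~(3), assume $\varphi\notin T$ while $\bar r\to\varphi\in T$ for every $r\in A\cup\{1\}$. Extraction yields $T\vdash\varphi\to\bar s$ for some $s>0$ in $A\cup\{1\}$. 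The hypothesis $A'=\{0\}$ enters exactly here: it guarantees an $r_0\in A$ with $0<r_0<s$. By hypothesis $T\vdash\bar r_0\to\varphi$, so transitivity via (G1) produces $T\vdash\bar r_0\to\bar s$, and axiom (RG2(b)) (applicable because $r_0<s$) collapses this to $T\vdash\bar s$, again contradicting strong consistency.

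The only real obstacle is justifying the proof-by-cases step invoked in part~(2); but this rule is a standard admissible rule for \g-type connectives, derivable from (G6), (G2), (G3), and the definition of $\vee$, and it has already been used in the paper. Every other step is a routine manipulation of the book-keeping axioms (RG2(a)) and (RG2(b)) designed to convert the various witnesses produced by the extraction trick into a single provable $\bar u$ or $\bar s$ whose derivation violates strong consistency. The role of the topological hypothesis $A'=\{0\}$ is isolated entirely in part~(3), where it supplies the interpolating constant $r_0$.
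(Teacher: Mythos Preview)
Your proposal is correct and follows essentially the same approach as the paper. The paper dismisses (1) and (2) as ``straightforward'' and spells out only (3); your argument for (3) is essentially identical to the paper's (same extraction of $T\vdash\varphi\to\bar s$, same use of $A'=\{0\}$ to produce a smaller constant, same transitivity followed by (RG2(b))), while your detailed treatments of (1) via (G6) and of (2) via proof-by-cases are exactly the kind of routine verification the paper has in mind.
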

\begin{proof}
(1) and (2) are straightforward. For (3) we show that $T\cup\{\varphi\}$ is
strongly consistent. Suppose, , to derive a contradiction, that $T\cup\{\varphi\}$ is not strongly consistent.
So, there is $r\in A\cup\{1\}$ such that $T\cup\{\varphi\}\vdash\bar{r}$. Thus, $T\vdash\varphi\to\bar{r}$.
Since $A'=\{0\}$, there is $s\in A\cup\{1\}$ such that $s<r$. By the assumption $\bar{s}\to\varphi\in T$. i.e, $T\vdash\bar{s}\to\varphi$.
Hence, by transitivity property of proof $T\vdash\bar{s}\to\bar{r}$ and by $RG2(b)$, $T\vdash\bar{r}$. A contradiction.
\end{proof}
Observe that by Zorn's lemma, any strongly consistent $\mathcal{L}$-theory $T$ contained in a
maximally strongly consistent $\mathcal{L}$-theory. The following theorem show that this maximally strongly
consistent extension could be chosen in a language $\mathcal{L}'\supseteq\mathcal{L}$ such that it is
Henkin. So, in the light of Theorem \ref{compactness of dmaximal theories gdowwn} the compactness
property of $\mathfrak{G}_{[0,1],A}$ is established for the case that $A'=\{0\}$ and $\mathcal{L}$ is
a countable first-order language.
\begin{thm}\label{comp1} Let $A'=\{0\}$. Every strongly consistent
$\mathcal{L}$-theory in $\mathfrak{G}_{[0,1],A}$ is contained in
a maximally strongly consistent deductively Henkin $\mathcal{L}'$-theory
such that $\mathcal{L}\subseteq\mathcal{L}'$.
\end{thm}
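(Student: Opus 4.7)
The plan is to modify the iterative Henkin construction of Theorem~\ref{compactness gn} to accommodate the fact that, when $A'=\{0\}$, the set $A\cup\{1\}$ has no smallest positive element. First, extend $\mathcal{L}$ to $\mathcal{L}'$ by adjoining a countable set of fresh constants $\{c_n\}_{n\in\mathbb{N}}$, enumerate all $\mathcal{L}'$-sentences as $\{\sigma_n\}_{n\in\mathbb{N}}$, and enumerate all $\mathcal{L}'$-formulas with exactly one free variable as $\{\varphi_n(x)\}_{n\in\mathbb{N}}$. Then build inductively an increasing chain $T=T_0\subseteq T_1\subseteq\cdots$ of strongly consistent $\mathcal{L}'$-theories, alternating maximality stages (where I try to add $\sigma_n$) with Henkin stages (where I introduce a fresh constant $c_{k(n)}$, not occurring in $T_n$ or $\varphi_n$, as the designated witness for $\forall x\,\varphi_n(x)$). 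Since $c_{k(n)}$ is fresh at introduction, $T_n\vdash\varphi_n(c_{k(n)})$ if and only if $T_n\vdash\forall x\,\varphi_n(x)$, so the d-Henkin invariant starts correctly without an explicit Henkin axiom needing to be added.

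The maximality stages must balance two concerns when considering $\sigma_n$: adding it must (a) preserve strong consistency, and (b) not cause any earlier designated witness $c_{k(j)}$ to become provable on $\varphi_j$ unless $\forall x\,\varphi_j(x)$ itself becomes provable. I refuse to add $\sigma_n$ when either condition fails. After taking $T^*=\bigcup_n T_n$, strong consistency is inherited from the $T_n$ via the finiteness of proofs, and d-Henkin follows from the witness invariant. Maximal strong consistency then requires showing that any $\sigma\notin T^*$ with $T^*\cup\{\sigma\}$ strongly consistent would in fact have been added at its stage --- here Lemma~\ref{maximally finitely satisfiable}(3) is invoked to rule out any permanent refusal caused by condition (b).

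The hard part, and where $A'=\{0\}$ is essential, is maintaining strong consistency through the maximality stages without a single minimum threshold. In Theorem~\ref{compactness gn} the single-threshold invariant $T_n\nvdash\chi_n$ with $\chi_0=\overline{\min_{r\in A}r}$ bundled all the individual non-provabilities $T_n\nvdash\bar{r}$ into one, but under $A'=\{0\}$ no such minimum exists. The way to rescue the construction is to exploit the density of $A$ near $0$: by (RG2(a)), any putative inconsistency $T_n\vdash\bar{r}$ propagates to $T_n\vdash\bar{s}$ for every $s\leq r$ in $A$, so a decreasing sequence in $A$ converging to $0$ reduces the infinitely many non-provability requirements to a coherent inductive hypothesis that can be tracked along the chain. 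At the limit, Lemma~\ref{maximally finitely satisfiable}(3) then provides exactly the trade-off (between $\bar{r}\to\varphi$ and $\varphi$) needed to verify maximal strong consistency and d-Henkin simultaneously.
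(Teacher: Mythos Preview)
Your plan diverges significantly from the paper's argument, and the divergence exposes a real gap.

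The paper does \emph{not} try to maintain a d-Henkin invariant through an iterative construction. Instead, it adds explicit threshold-parametrised Henkin axioms: for each formula $\varphi(x)$ and each pair $r>s$ in $A\cup\{1\}$ it introduces a fresh constant $c_{\varphi,r,s}$ and the sentence
\[
\theta_{\varphi,r,s}\;=\;(\bar{r}\to\forall x\,\varphi(x))\ \vee\ (\varphi(c_{\varphi,r,s})\to\bar{s}).
\]
The work then consists of (i) a direct syntactic argument that adjoining all the $\theta_{\varphi,r,s}$ preserves strong consistency, and (ii) a Zorn's-lemma passage to a maximally strongly consistent extension $T'$. The d-Henkin property is read off \emph{after} maximality is in hand, by applying Lemma~\ref{maximally finitely satisfiable}(2) and~(3) to the $\theta$'s already sitting inside $T'$. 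Maximality and d-Henkin are thus decoupled: the former comes from Zorn, the latter from the axioms.

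Your plan, by contrast, tries to enforce both simultaneously via condition~(b), and this creates a tension you do not resolve. Concretely, take $\sigma=\varphi_j(c_{k(j)})$ itself. At its maximality stage one has $T_n\cup\{\sigma\}\vdash\varphi_j(c_{k(j)})$ trivially, while in general $T_n\cup\{\sigma\}\nvdash\forall x\,\varphi_j(x)$; so condition~(b) forces you to refuse $\sigma$. Yet $T_n\cup\{\sigma\}$ may perfectly well be strongly consistent, and nothing in the later stages makes $T^*\cup\{\sigma\}$ strongly \emph{in}consistent. Your appeal to Lemma~\ref{maximally finitely satisfiable}(3) to ``rule out any permanent refusal caused by condition~(b)'' is circular: that lemma is stated for maximally strongly consistent theories, which is exactly the property of $T^*$ you are trying to establish. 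The paper's device of threshold-indexed witnesses and disjunctive Henkin axioms is precisely what avoids this circularity, allowing Lemma~\ref{maximally finitely satisfiable} to be invoked only once maximality has already been secured by Zorn.
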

\begin{proof}
Let $T$ be a strongly consistent $\mathcal{L}$-theory. $T'$ will be constructed in countably
many phases. Indeed, $T'$ is a maximally strongly consistent theory
containing the union of countably many maximally strongly
consistent $\mathcal{L}_i$-theories $T_i$ in which for every
$i\ge 1$, $\mathcal{L}_i$ have a witness constant for each unprovable sentence
$\forall x\ \varphi(x)$ where $\varphi(x)\in Form(\mathcal{L}_{i-1})$. To this end,
consider the following notions.
\begin{itemize}
\item $\mathcal{L}_0=\mathcal{L}$.
\item $F_0=Form(\mathcal{L}_0)$ and for $i\ge 1$, $F_i=Form(\mathcal{L}_i)\setminus Form(\mathcal{L}_{i-1})$.
\item For each $i\ge 1$, $\mathcal{L}_i=\mathcal{L}_{i-1}\cup\{c_{\varphi(x),r,s}:\varphi(x)\in F_{i-1},
r,s\in A\cup\{1\}, r>s\}$ where each $c_{\varphi(x),r,s}$ is a new constant symbol.
\item $T'_0=T$.
\item For nullary connectives $\bar{r}$ and $\bar{s}$ and formula $\varphi(x)$,
$\theta_{\varphi(x),r,s}=(\bar{r}\to\forall x\,\varphi(x))\vee(\varphi(c_{\varphi(x),r,s})\to\bar{s})$.
\item For each $i\ge 1$, $T'_i=T_{i-1}\cup\{\theta_{\varphi(x),r,s}:\varphi(x)\in F_{i-1},
r,s\in A\cup\{1\}, r>s\}$ where $T_{i-1}$ is a maximally strongly
consistent $\mathcal{L}_{i-1}$-theory containing $T'_{i-1}$.
\end{itemize}

Firstly, we show that for each $i\ge 0$, $T'_i$ is strongly consistent. Obviously, $T'_0$ is strongly consistent.
Assume that for each $k<n$, $T'_k$ is strongly consistent. Suppose that, on the contrary, $T'_n$ is not
strongly consistent. Thus, there exists $t\in A\cup\{1\}$ such that $T'_n\vdash\bar{t}$. Hence, there is a finite subset
$S$ of $T_{n-1}$ such that $S\cup\{\theta_{\varphi_i(x),r_i,s_i}\}_{i=1}^m\vdash\bar{t}$ and no proper subset of
$S\cup\{\theta_{\varphi_i(x),r_i,s_i}\}_{i=1}^m$ proves $\bar{t}$. Set,
$\Gamma=\{\theta_{\varphi_i(x),r_i,s_i}\}_{i=1}^{m-1}$. By deduction theorem,
$S\cup\Gamma\vdash\theta_{\varphi_m(x),r_m,s_m}\to\bar{t}$. Consider the abbreviations
$\theta_m$ and $c_m$ for $\theta_{\varphi_m(x),r_m,s_m}$ and $c_{\varphi_m(x),r_m,s_m}$, respectively.
Since $\varphi(c_m)\to\bar{s}_m\vdash\theta_m$ we have
$S\cup\Gamma\vdash(\varphi(c_m)\to\bar{s}_m)\to\bar{t}$, which leads to
deduce that $T_{n-1}\cup\Gamma\vdash\bar{s}_m\to\forall x\,\varphi_m(x)$.
On the other hand, as $\bar{r}_m\to\forall x\,\varphi_m(x)\vdash\theta_m$ we
have $S\cup\Gamma\vdash(\bar{r}_m\to\forall x\,\varphi_m(x))\to\bar{t}$ and
so one could conclude that $T_{n-1}\cup\Gamma\vdash\forall x\,\varphi_m(x)\to\bar{r}_m$.
Hence, $T_{n-1}\cup\Gamma\vdash\bar{s}_m\to\bar{r}_m$ and so
$T_{n-1}\cup\Gamma\vdash\bar{t}$ which is a contradiction.

Secondly, let $\mathcal{L}'=\bigcup_{n\ge 0}\mathcal{L}_n$ and take a
maximally strongly consistent $\mathcal{L}'$-theory $T'$, containing $\bigcup_{n\ge 0} T_n$. $T'$ is
provably Henkin. Verily, if $T'\nvdash\forall x\,\varphi(x)$ for some $\varphi(x)\in
Form(\mathcal{L}')$ then by maximality of $T'$ and Lemma \ref{maximally finitely satisfiable}-3
there is $r\in A\cup\{1\}$ such that $\bar{r}\to\forall x\,\varphi(x)\notin T'$.
Now, as $A'=\{0\}$ take $s\in A\cup\{1\}$ such that $s<r$. As, $(\bar{r}\to\forall x\,\varphi(x))\vee(\varphi(c_{\varphi(x),r,s})\to\bar{s})\in T'$, maximality of $T'$
and Lemma \ref{maximally finitely satisfiable}-2 implies that
$\varphi(c_{\varphi(x),r,s})\to\bar{s}\in T'$. Thus, by Lemma \ref{maximally finitely satisfiable}-3
$T'\nvdash\varphi(c_{\varphi(x),r,s})$, and the proof is completed.
\end{proof}
\begin{cor}\label{compactness gn gdown}
Let $\mathcal{L}$ be a countable first-order language.
If $A'=\{0\}$ then $\mathfrak{G}_{[0,1],A}$ satisfy the compactness property.
Specially, $\mathfrak{G}^*_\downarrow$ admits the compactness property.
\end{cor}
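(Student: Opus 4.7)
The plan is to derive the corollary by chaining Theorem \ref{comp1} with Theorem \ref{compactness of dmaximal theories gdowwn}; the reverse direction in the definition of compactness is immediate, so I will only argue that finite satisfiability entails satisfiability. Let $T$ be a finitely satisfiable $\mathcal{L}$-theory, and the goal is to produce an $\mathcal{L}$-structure modelling $T$.

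The first step is to upgrade finite satisfiability to strong consistency. If $T \vdash \bar{r}$ for some $r \in A \cup \{1\}$, then a finite subset $S \subseteq T$ already proves $\bar{r}$; by soundness $S \models \bar{r}$, yet any model $\mathcal{M}$ of $S$ (which exists by finite satisfiability) satisfies $\bar{r}^{\mathcal{M}} = r > 0$, so $\mathcal{M} \nmodels \bar{r}$, a contradiction. Hence $T$ is strongly consistent.

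Next, I apply Theorem \ref{comp1} to obtain a language extension $\mathcal{L}' \supseteq \mathcal{L}$ and a maximally strongly consistent, deductively Henkin $\mathcal{L}'$-theory $T' \supseteq T$. Since $A' = \{0\}$ forces $A$ to be countable and $\mathcal{L}$ is countable, the language $\mathcal{L}'$ constructed in the proof of Theorem \ref{comp1} is still countable. By Lemma \ref{maximally finitely satisfiable}(1), $T'$ is deductively complete as well. Thus $T'$ is a countable, strongly consistent, d-complete, d-Henkin $\mathcal{L}'$-theory, and Theorem \ref{compactness of dmaximal theories gdowwn} delivers an $\mathcal{L}'$-structure $\mathcal{M}_{T'}$ with $\mathcal{M}_{T'} \models T'$. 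Its reduct to $\mathcal{L}$ models $T$, establishing the first claim. The specialization to $\mathfrak{G}^*_\downarrow$ is immediate since $A = (0,1)_\downarrow$ has $A' = \{0\}$.

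The only real obstacle is the bookkeeping at the first step: one must carefully verify that strong consistency (formulated in terms of $\vdash$) genuinely follows from finite satisfiability via soundness, and that the countability of $\mathcal{L}'$ built in Theorem \ref{comp1} is preserved so that Lemma \ref{embeddig godel algebra} remains applicable inside Theorem \ref{compactness of dmaximal theories gdowwn}. Once these bookkeeping points are settled, the corollary is essentially an assembly of previously established theorems.
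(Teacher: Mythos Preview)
Your proposal is correct and follows essentially the same route the paper sketches in the paragraph preceding Theorem~\ref{comp1}: pass from finite satisfiability to strong consistency, invoke Theorem~\ref{comp1} to obtain a maximally strongly consistent d-Henkin extension in a countable language, use Lemma~\ref{maximally finitely satisfiable}(1) to get d-completeness, and then apply Theorem~\ref{compactness of dmaximal theories gdowwn}. The only additions you make---the explicit soundness argument for strong consistency and the countability check for $\mathcal{L}'$---are exactly the bookkeeping details the paper leaves implicit.
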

\subsection{Entailment Compactness}\hfill

Now, we study the entailment compactness and approximate entailment compactness in
\g logics $\mathfrak{G}_{[0,1],A}$ and $\mathfrak{G}^\Delta_{[0,1],A}$. Note that
the usual compactness follows from the entailment compactness. However, the method
we use in this subsection based on the notion of "finitely entailment" while
the method used in the previous subsection is based on the notion of "proof" and the
concept of "strongly consistency".

The following example show that the entailment compactness fails on $\mathfrak{G}^*_\downarrow$.
\begin{exa}\label{fail entail compact gdown}
let $\mathcal{L}=\{\rho\}$ where $\rho$ is a nullary predicate symbol.
Let $T=\{\overline{\frac{1}{n}}\to\rho\}_{n\in\mathbb{N}}$. One can easily verify that
in the \g logic $\mathfrak{G}^*_\downarrow$, $T\models\rho$ but $T\nmodel{f}{}\rho$.
\end{exa}
However, when $A'=\{0\}$, the approximate entailment compactness holds in $\mathfrak{G}_{[0,1],A}$.
\begin{thm}\label{approx entail com theorem}
Let $\mathcal{L}$ be a countable first-order language,
$T$ be an $\mathcal{L}$-theory, and $\varphi$ be an $\mathcal{L}$-sentence. If $A'=\{0\}$ then
the \g logic $\mathfrak{G}_{[0,1],A}$ enjoys the approximate entailment compactness.
Particularly, in $\mathfrak{G}^*_\downarrow$ we have,
\begin{center}
$T\models\varphi$ if and only if $T\model{f}{}\overline{\frac{1}{n}}\to\varphi$
for all integers $n\ge 1$.
\end{center}
\end{thm}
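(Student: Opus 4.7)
The plan is to prove the two directions separately. The reverse direction is essentially a topological consequence of the hypothesis $A'=\{0\}$, while the forward direction is reduced to the compactness property already established in Corollary~\ref{compactness gn gdown}.

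For $(\Leftarrow)$: Assume that for every $r\in A\cup\{1\}$ there is a finite $T'_r\subseteq T$ with $T'_r\models\bar{r}\to\varphi$, and fix any $\mathcal{M}\models T$. Since $\mathcal{M}$ models each $T'_r$, the evaluation rule for $\dotto$ forces $(\bar{r}\to\varphi)^{\mathcal{M}}=0$, i.e.\ $\varphi^{\mathcal{M}}\le r$. Because $A'=\{0\}$, the set $A$ contains elements arbitrarily close to $0$, so $\varphi^{\mathcal{M}}\le\inf A=0$; hence $\varphi^{\mathcal{M}}=0$ and $\mathcal{M}\models\varphi$.

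For $(\Rightarrow)$: Assume $T\models\varphi$ and fix $r\in A\cup\{1\}$. The case $r=1$ is trivial, since $\bar{1}^{\mathcal{M}}=1\ge\varphi^{\mathcal{M}}$ always, so $(\bar{1}\to\varphi)^{\mathcal{M}}=0$ and any finite $T'\subseteq T$ works. Otherwise $r\in A$ and $0<r<1$, and I argue by contradiction. Suppose no finite subset of $T$ entails $\bar{r}\to\varphi$; then for every finite $T'\subseteq T$ there is a model $\mathcal{M}_{T'}$ of $T'$ with $\varphi^{\mathcal{M}_{T'}}>r$. The key observation is that the derived connective $\varphi\Rightarrow\bar{r}$, defined in Section~\ref{section godel logic}, has interpretation $0$ precisely when $\varphi^{\mathcal{M}}>r$, using that $r>0$. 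Hence each $\mathcal{M}_{T'}$ models $T'\cup\{\varphi\Rightarrow\bar{r}\}$, and this extended theory is finitely satisfiable.

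Now invoke Corollary~\ref{compactness gn gdown} (compactness of $\mathfrak{G}_{[0,1],A}$ when $A'=\{0\}$ and $\mathcal{L}$ is countable) to obtain a model $\mathcal{M}\models T\cup\{\varphi\Rightarrow\bar{r}\}$. Then $\mathcal{M}\models T$ gives $\varphi^{\mathcal{M}}=0$ by hypothesis, whereas $\mathcal{M}\models\varphi\Rightarrow\bar{r}$ gives $\varphi^{\mathcal{M}}>r>0$, a contradiction. The ``Particularly'' clause for $\mathfrak{G}^*_\downarrow$ is the special case $A=\{1/n:n\in\mathbb{N}\}$, for which $A'=\{0\}$ automatically. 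The only delicate point in the whole argument is identifying the right G\"odel-definable witness for the strict inequality $\varphi^{\mathcal{M}}>r$, namely $\varphi\Rightarrow\bar{r}$; once this translation is in place, the already-proved compactness theorem does all the heavy lifting, so I expect no serious obstacle beyond this single observation.
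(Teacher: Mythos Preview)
Your proof is correct and follows essentially the same route as the paper's: both directions are argued the same way, with the forward direction reduced to the compactness theorem (Corollary~\ref{compactness gn gdown}) by adjoining a single sentence witnessing the failure of $\bar{r}\to\varphi$. The only cosmetic difference is that the paper uses the simpler sentence $\varphi\to\overline{r_0}$ (which expresses $\varphi^{\mathcal{M}}\ge r_0$) rather than your $\varphi\Rightarrow\bar{r}$ (which expresses the strict inequality $\varphi^{\mathcal{M}}>r$); either choice yields the desired contradiction with $\varphi^{\mathcal{M}}=0$, and the paper's version avoids having to single out the case $r=1$.
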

\begin{proof}
Let $T\models\varphi$. We want to show that $T\model{f}{}\bar{r}\to\varphi$
for all $r\in A\cup\{1\}$. Suppose not. So, there exists $r_0\in A\cup\{1\}$
such that for any finite subset $S$ of $T$,
$S\nmodels\overline{r_0}\to\varphi$. Thus, for any finite subset
$S$ of $T$ there is a model $\mathcal{M}$ of $S$
such that $\mathcal{M}\nmodels\overline{r_0}\to\varphi$, which means that
$\mathcal{M}\models\varphi\to\overline{r_0}$. Thus, for any finite subset
$S$ of $T$, $S\cup\{\varphi\to\overline{r_0}\}$ is satisfiable. Hence,
compactness property of $\mathfrak{G}_{[0,1],A}$ implies that $T\cup\{\varphi\to\overline{r_0}\}$ is satisfiable which is
in contradict with $T\models\varphi$.\\
Conversely, suppose that for any $r\in A\cup\{1\}$, $T\model{f}{}\bar{r}\to\varphi$.
We want to show that $T\models\varphi$. Since $T$ has a model, by
\emph{reductio ad absurdum} suppose that there exists a model
$\mathcal{M}$ of $T$ such that $\mathcal{M}\nmodels\varphi$. But because
$A'=\{0\}$, one could find $r\in A\cup\{1\}$ such that $\varphi^\mathcal{M}\ge r$. This means for any
finite subset $S$ of $T$, $S\nmodels\bar{r}\to\varphi$, a contradiction.
\end{proof}
On the other hand, $\mathfrak{G}^*_n$ enjoys the entailment compactness. This follows from the
following two lemmas.
\begin{lem}\label{compactness of maximal theories}
Let $\mathcal{L}$ be a countable first-order language. Let $A'=\emptyset$,
$T$ be a complete Henkin $\mathcal{L}$-theory, and $\varphi$ be an
$\mathcal{L}$-sentence. In the \g logic $\mathfrak{G}_{[0,1],A}$,
$T\models\varphi$ if and only if $T\model{f}{}\varphi$.
\end{lem}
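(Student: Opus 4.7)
The direction $T \model{f}{} \varphi \Rightarrow T \models \varphi$ is immediate, since any model of $T$ satisfies every finite subset $S\subseteq T$. I prove the converse contrapositively: assuming $T \nmodel{f}{} \varphi$, I construct an $\mathcal{L}$-structure $\mathcal{M}_T$ with $\mathcal{M}_T \models T$ and $\varphi^{\mathcal{M}_T} \neq 0$. The argument mirrors the canonical-model construction of Theorem~\ref{compactness of dmaximal theories gdowwn} but with the finite-entailment relation $\model{f}{}$ in place of formal provability $\vdash$.

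On $\mathcal{L}$-sentences declare $\varphi \sim_T \psi$ iff $T \model{f}{} \varphi \leftrightarrow \psi$; this is an equivalence relation because any two finite witnesses can be combined by taking their union. On the quotient $Lind_f(T)$ put $[\varphi]_T \lesssim [\psi]_T$ iff $T \model{f}{} \psi \to \varphi$. Completeness of $T$ makes $\lesssim$ linear, and the G\"odel axioms together with the bookkeeping axioms (RG1)--(RG3) equip $Lind_f(T)$ with operations turning it into a countable linearly ordered G\"odel algebra $\mathbb{L}_T$ in which $[\bar r]_T$ plays the role of $r^{\mathbb{L}_T}$. Since $A'=\emptyset\subseteq\{0\}$, Lemma~\ref{embeddig godel algebra} supplies a continuous embedding $g\colon\mathbb{L}_T\hookrightarrow[0,1]_A$ with $g([\bar r]_T)=r$. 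I then take $\mathcal{M}_T$ with universe $CM(T)$, function symbols acting syntactically, and $P^{\mathcal{M}_T}(\bar t):=g([P(\bar t)]_T)$.

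The heart of the argument is to prove by induction that $\psi^{\mathcal{M}_T}=g([\psi]_T)$ for every $\mathcal{L}$-sentence $\psi$. The atomic, $\bot$, $\bar r$, and propositional cases follow from the definitions and the fact that $g$ is a G\"odel algebra homomorphism. The quantifier step is where the real work (and the main obstacle) lies. For $\psi=\forall y\,\chi(y)$ I need $[\forall y\,\chi(y)]_T$ to be the supremum in $\mathbb{L}_T$ of $\{[\chi(t)]_T:t\in CM(T)\}$; continuity of $g$ (it preserves existing suprema) then yields
\[
(\forall y\,\chi(y))^{\mathcal{M}_T}=\sup_t g([\chi(t)]_T)=g([\forall y\,\chi(y)]_T).
\]
Axiom (G$\forall$1) gives $[\chi(t)]_T\lesssim[\forall y\,\chi(y)]_T$ for every $t$. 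For the least-upper-bound half, let $\theta$ be a sentence with $[\chi(t)]_T\lesssim[\theta]_T$, i.e.\ $T\model{f}{}\theta\to\chi(t)$ for every closed term $t$; if $T\nmodel{f}{}\forall y\,(\theta\to\chi(y))$, the Henkin hypothesis produces a witness constant $c$ with $T\nmodel{f}{}\theta\to\chi(c)$, contradicting the assumption, so $T\model{f}{}\forall y\,(\theta\to\chi(y))$. A direct semantic computation shows that $\forall y\,(\theta\to\chi(y))$ and $\theta\to\forall y\,\chi(y)$ take value $0$ in exactly the same G\"odel structures (both amount to ``$\theta^{\mathcal{M}}\ge\chi(a)^{\mathcal{M}}$ for every $a$''), so the same finite witness delivers $T\model{f}{}\theta\to\forall y\,\chi(y)$, i.e.\ $[\forall y\,\chi(y)]_T\lesssim[\theta]_T$. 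The $\exists$-clause is symmetric via (G$\exists$1) and (G$\exists$2).

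With the inductive identity in hand, every $\psi\in T$ satisfies $T\model{f}{}\psi$, whence $\psi^{\mathcal{M}_T}=g([\psi]_T)=0$, so $\mathcal{M}_T\models T$; meanwhile $T\nmodel{f}{}\varphi$ forces $g([\varphi]_T)\neq 0$ and hence $\varphi^{\mathcal{M}_T}\neq 0$, witnessing $T\nmodels\varphi$. The principal difficulty is precisely the quantifier clause above: the Henkin property is exactly the leverage required to make the canonical closed-term universe rich enough for universals to be interpreted as genuine suprema along closed terms rather than as strictly larger values in $[0,1]$.
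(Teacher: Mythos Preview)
Your proof is correct and follows essentially the same route as the paper: build the Lindenbaum algebra with respect to $\model{f}{}$ in place of $\vdash$, embed it into $[0,1]_A$ via Lemma~\ref{embeddig godel algebra}, form the canonical term model, and read off $\varphi^{\mathcal{M}_T}=g([\varphi]_T)>0$ from $T\nmodel{f}{}\varphi$. The paper compresses all of this into a reference back to Theorem~\ref{compactness of dmaximal theories gdowwn}, whereas you spell out the quantifier clause and the injectivity step explicitly; the paper, for its part, is more emphatic that the strict inequality $g([\varphi]_T)>0$ is exactly where $A'=\emptyset$ (Case~1 of Lemma~\ref{embeddig godel algebra}) is needed, as the Remark following the lemma makes clear that this step would fail for $A'=\{0\}$.
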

\begin{proof}
Let $Lind(T)$ be the class of all $T$-equivalent $\mathcal{L}$-sentences, i.e., the equivalence
classes $[\varphi]_T$ of all $\mathcal{L}$-sentences $\varphi$ modulo to the following equivalence relation.
\begin{center}
$\varphi\sim\psi$ if and only if $T\model{f}{}\varphi\leftrightarrow\psi$.
\end{center}
By the same way as the proof of Theorem \ref{compactness of dmaximal theories gdowwn} and
replacing $\vdash$ by $\model{f}{}$ we obtain an $\mathcal{L}$-structure $\mathcal{M}_T\models T$. Now, let $T\models\varphi$ but
$T\nmodel{f}{}\varphi$. So, $[\varphi]_T\gnsim[\bar{0}]_T$. But then since $A'=\emptyset$,
the proof of Lemma \ref{embeddig godel algebra} show that $\varphi^\mathcal{M}=g([\varphi]_T)>0$, a
contradiction.
\end{proof}
\begin{rem}
If $A'=\{0\}$ and $[\varphi]_T\gnsim[\bar{0}]_T$, then $g([\varphi]_T)$ does not necessarily grater than $0$ (cf. proof of Lemma \ref{embeddig godel algebra}, case 2).
\end{rem}
\begin{lem}\label{co1}
Let $T$ be an $\mathcal{L}$-theory, $\varphi$ be an $\mathcal{L}$-sentence, and $T\nmodel{f}{}\varphi$.
The followings are hold in \g logics $\mathfrak{G}^\Delta_{[0,1],A}$.
\begin{enumerate}
\item There exists a complete $\mathcal{L}$-theory $\overline{T}\supseteq T$ such that $\overline{T}\nmodel{f}{}\varphi$.
\item There exists a first-order language $\mathcal{L}'\supseteq\mathcal{L}$ and a complete Henkin $\mathcal{L}'$-theory $T'\supseteq T$ such that $T'\nmodel{f}{}\varphi$.
\end{enumerate}
\end{lem}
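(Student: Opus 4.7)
The plan is to mimic the proof of Theorem \ref{compactness gn} with the systematic replacement of $\vdash$ by $\model{f}{}$, tracking through the induction a single ``threshold'' sentence $\chi_n$, initialised by $\chi_0:=\varphi$, with the invariant $T_n\nmodel{f}{}\chi_n$. Because $\vee$ is interpreted as the pointwise minimum in the reversed semantics of the paper, each successor $\chi_{n+1}$ is either $\chi_n$ or $\chi_n\vee(\text{something})$, so every $\chi_n$ retains $\varphi$ as a disjunct; hence the invariant will automatically yield $T_n\nmodel{f}{}\varphi$, and ultimately $T'\nmodel{f}{}\varphi$.

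For (1), enumerate $\{(\theta_i,\psi_i)\}_{i\in\mathbb{N}}$, set $T_0:=T$, and define $T_{i+1}:=T_i\cup\{\theta_i\to\psi_i\}$ if this still fails to finitely entail $\varphi$, otherwise $T_{i+1}:=T_i\cup\{\psi_i\to\theta_i\}$. At least one option works: if both failed with finite witnesses $S_1,S_2\subseteq T_i$, then for any $\mathcal{M}\models S_1\cup S_2$ linearity of $[0,1]$ forces one of $(\theta_i\to\psi_i)^\mathcal{M}$ or $(\psi_i\to\theta_i)^\mathcal{M}$ to equal $0$, so in either branch $\mathcal{M}\models\varphi$, giving the contradiction $S_1\cup S_2\models\varphi$. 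Then $\overline{T}:=\bigcup_i T_i$ is complete and satisfies $\overline{T}\nmodel{f}{}\varphi$.

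For (2), extend $\mathcal{L}$ to $\mathcal{L}'$ by a countable family of fresh constants, fix an enumeration $\{\varphi_i(x)\}$ of one-variable $\mathcal{L}'$-formulas, and interleave the completeness construction of (1) with a Henkin construction. At the Henkin stage for $\varphi_i$, pick a constant $c_i$ not yet used and split exactly as in Theorem \ref{compactness gn}: if $T_n\nmodel{f}{}\chi_n\vee\varphi_i(c_i)$, set $T_{n+1}:=T_n$ and $\chi_{n+1}:=\chi_n\vee\varphi_i(c_i)$; otherwise set $T_{n+1}:=T_n\cup\{\chi_n\to\forall x\,\varphi_i(x)\}$ and $\chi_{n+1}:=\chi_n$. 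The semantic substitute for (Gen)+(G$\forall$3) required in the second branch --- that $S\models\chi_n\vee\varphi_i(c_i)$ with $c_i$ fresh entails $S\models\chi_n\vee\forall x\,\varphi_i(x)$ --- follows by varying $c_i^\mathcal{M}$ over all $a\in M$: for any $\mathcal{M}\models S$, either $\chi_n^\mathcal{M}=0$ or $\varphi_i(a)^\mathcal{M}=0$ for every $a$, and the latter gives $(\forall x\,\varphi_i(x))^\mathcal{M}=\sup_a\varphi_i(a)^\mathcal{M}=0$. A short case analysis, handling whether the new axiom sits in a potential witnessing finite set, then verifies $T_{n+1}\nmodel{f}{}\chi_{n+1}$ in each branch.

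Set $T':=\bigcup_n T_n$. Completeness is immediate from the even stages. Because $\chi_m$ is a disjunct of every $\chi_k$ with $k\ge m$, the invariant $T_k\nmodel{f}{}\chi_k$ upgrades to $T'\nmodel{f}{}\chi_m$ for every $m$, and in particular $T'\nmodel{f}{}\varphi$. For the Henkin condition, suppose $T'\nmodel{f}{}\forall x\,\varphi_i(x)$; Case 2 at the stage for $\varphi_i$ must be ruled out, since combining $\chi_n\to\forall x\,\varphi_i(x)\in T_{n+1}$ with the derived $S\models\chi_n\vee\forall x\,\varphi_i(x)$ forces $T_{n+1}\model{f}{}\forall x\,\varphi_i(x)$; hence Case 1 occurred, $\chi_{n+1}=\chi_n\vee\varphi_i(c_i)$, and $T'\nmodel{f}{}\chi_{n+1}$ implies $T'\nmodel{f}{}\varphi_i(c_i)$. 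The only genuinely delicate point is the freshness lifting used in Case 2; everything else is bookkeeping of the invariant.
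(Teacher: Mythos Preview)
Your argument is correct, but it follows a genuinely different path from the paper's proof.

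The paper does \emph{not} mimic Theorem~\ref{compactness gn}. Instead, for part~(2) it builds a tower: at each level it first completes the current theory to some $\overline{T}_n$ with $\overline{T}_n\nmodel{f}{}\epsilon_n$, then enlarges the language by a \emph{fresh nullary predicate symbol} $\epsilon_{n+1}$ together with Henkin constants $c_\psi$ for every $\psi(x)$ with $\overline{T}_n\nmodel{f}{}\forall x\,\psi(x)$, and sets
\[
T_{n+1}=\overline{T}_n\cup\{\epsilon_n\to\epsilon_{n+1}\}\cup\{\psi(c_\psi)\to\epsilon_{n+1}\}.
\]
The invariant $T_{n+1}\nmodel{f}{}\epsilon_{n+1}$ is then verified purely semantically: given a finite piece of $T_{n+1}$, one takes a model witnessing that the finitely many relevant values $\epsilon_n^\mathcal{M},(\forall x\,\psi_i)^\mathcal{M}$ are all ${}\ge\alpha>0$, and simply interprets the new symbol $\epsilon_{n+1}$ by any nonzero value below $\alpha$.

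Your approach instead stays within $\mathcal{L}$ plus Henkin constants, carrying an explicit threshold sentence $\chi_n$ (a growing disjunction with $\varphi$ as a permanent disjunct) and reproducing the Case~1/Case~2 split of Theorem~\ref{compactness gn} with $\model{f}{}$ in place of $\vdash$. The delicate step---lifting $S\models\chi_n\vee\varphi_i(c_i)$ to $S\models\chi_n\vee\forall x\,\varphi_i(x)$ by varying the interpretation of the fresh constant---is exactly the semantic analogue of (Gen)+(G$\forall$3) and is handled correctly; your verification that $T_{n+1}\nmodel{f}{}\chi_n$ in Case~2 (combining the two finite witnesses $S_0,S'\subseteq T_n$) is also sound.

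What each route buys: your construction is leaner---no auxiliary nullary predicates, and the final language $\mathcal{L}'$ is just $\mathcal{L}$ with constants---and it makes the parallel with Theorem~\ref{compactness gn} transparent. The paper's device of fresh $\epsilon_n$'s trades that economy for a very clean inductive step: because $\epsilon_{n+1}$ is a brand-new symbol, one can \emph{choose} its value in a given model, so the invariant reduces to a one-line interpretation argument rather than the case analysis you carry out.
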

\begin{proof}
Proof of (1) is straightforward. For (2) assume that
\begin{itemize}
\item $\mathcal{L}_0=\mathcal{L}$, $T_0=T$, $\epsilon_0=\varphi$,
\item for $n\ge 0$, $\overline{T}_n$ be a complete theory containing $T_n$ such that
$\overline{T}_n\nmodel{f}{}\epsilon_n$,
\item for $n\ge 0$, $\mathcal{L}_{n+1}=\mathcal{L}_n\cup\{\epsilon_{n+1}\}
\cup\{c_\psi: \overline{T}_n\nmodel{f}{}\forall x\,\psi(x)\}$,
where $\epsilon_{n+1}$ is a new nullary predicate symbol and
each $c_\psi$ is a new constant symbol,
\item for $n\ge 0$, $T_{n+1}=\overline{T}_n\cup\{\epsilon_{n}\to\epsilon_{n+1}\}
\cup\{\psi(c_\psi)\to\epsilon_{n+1}: \overline{T}_n\nmodel{f}{}\forall x\,\psi(x)\}$.
\end{itemize}
We show that for each $n\ge 0$, $T_n\nmodel{f}{}\epsilon_n$. Obviously
$T_0\nmodel{f}{}\epsilon_0$. Assume that $T_n\nmodel{f}{}\epsilon_n$.
We show that $T_{n+1}\nmodel{f}{}\epsilon_{n+1}$. To this
end, let $A=B\cup\{\psi_i(c_{\psi_i})\to\epsilon_{n+1}:
\overline{T}_n\nmodel{f}{}\forall x\,\psi_i(x)\}_{i=1}^m$ be a finite subset of $T_{n+1}$
in which $B$ is a finite subset of $\overline{T}_n$. Since
$\overline{T}_n\nmodel{f}{}\epsilon_n$ and for each $1\le i\le m$,
$\overline{T}_n\nmodel{f}{}\forall x\,\psi_i(x)$ and $\overline{T}_n$
is a complete $\mathcal{L}_n$-theory,
$\overline{T}_n\nmodel{f}{}\epsilon_n\vee\big(\ds\bigvee_{1\le i\le m}\forall x\,\psi_i(x)\big)$.
Now, as $B$ is a finite subset of $\overline{T}_n$, there is an
$\mathcal{L}_n$-structure $\mathcal{M}\models B$ such that
$\min\{\epsilon^\mathcal{M}_n, \big(\forall x\,\psi_1(x)\big)^\mathcal{M}, ...,
\big(\forall x\,\psi_m(x)\big)^\mathcal{M}\}=\alpha>0$. Interpreting
$\epsilon_{n+1}$ in $\mathcal{M}$ by a nonzero rational number less that $\alpha$
leads to the fact that $\mathcal{M}\models A$ and $\mathcal{M}\nmodels\epsilon_{n+1}$.

Now, let $\mathcal{L}'=\cup_{n=0}^\infty \mathcal{L}_n$ and
$T^*=\cup_{n=0}^\infty T_n$. One could easily verify that $T^*\nmodel{f}{}\varphi$.
Let $T'$ be a complete $\mathcal{L}'$-theory containing $T^*$ such that $T'\nmodel{f}{}\varphi$.
Obviously the construction implies that $T'$ is a complete Henkin $\mathcal{L}'$-theory.
\end{proof}
Now, in the light of Lemma \ref{compactness of maximal theories} and
Lemma \ref{co1} we deduced the entailment compactness of $\mathfrak{G}^*_n$.
\begin{cor}\label{final entailment compactness gn}
Let $\mathcal{L}$ be a countable first-order language. $\mathfrak{G}^*_n$ enjoys the
entailment compactness property.
\end{cor}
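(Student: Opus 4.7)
The plan is to prove the contrapositive: assuming $T\nmodel{f}{}\varphi$ for an $\mathcal{L}$-theory $T$ and $\mathcal{L}$-sentence $\varphi$, I will produce a structure $\mathcal{M}\models T$ with $\mathcal{M}\nmodels\varphi$, thereby contradicting $T\models\varphi$. The argument is nothing more than a combination of the two preceding lemmas, exploiting that $\mathfrak{G}^*_n$ is the instance of $\mathfrak{G}_{[0,1],A}$ in which $A=\{r_1,\dots,r_n\}$ is finite, so $A'=\emptyset$.

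First I invoke Lemma \ref{co1}(2). Although it is stated for $\mathfrak{G}^\Delta_{[0,1],A}$, inspection of its construction shows that the $\Delta$ connective plays no role: the witnesses $\epsilon_{n+1}$ are fresh nullary predicate symbols and the induction only manipulates ordinary \g connectives. Applied to $T$ and $\varphi$, it produces a countable language $\mathcal{L}'\supseteq\mathcal{L}$ and a complete Henkin $\mathcal{L}'$-theory $T'\supseteq T$ that still satisfies $T'\nmodel{f}{}\varphi$.

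Next I apply Lemma \ref{compactness of maximal theories}, whose hypotheses ($A'=\emptyset$, $T'$ complete Henkin, countable first-order language) are all in place. Its contrapositive gives $T'\nmodels\varphi$, so there exists an $\mathcal{L}'$-structure $\mathcal{M}'$ with $\mathcal{M}'\models T'$ and $\mathcal{M}'\nmodels\varphi$. The $\mathcal{L}$-reduct $\mathcal{M}$ of $\mathcal{M}'$ then satisfies $\mathcal{M}\models T$ (since $T\subseteq T'$) and $\mathcal{M}\nmodels\varphi$ (since $\varphi$ is an $\mathcal{L}$-sentence), giving the desired countermodel.

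The real content lies in the two lemmas; the corollary itself only wires them together. The conceptual hurdle that the combination resolves is that, in contrast to the proof-based Theorem \ref{compactness gn}, one cannot in general extend a theory to a complete Henkin extension while preserving \emph{non-provability} (cf.\ the Remark after Theorem \ref{compactness gn}); what one \emph{can} preserve is non-finite-entailment, and when $A'=\emptyset$ this is strong enough, via Lemma \ref{compactness of maximal theories}, to guarantee a countermodel. The only point I would double-check carefully is the applicability of Lemma \ref{co1}(2) in the $\Delta$-free setting of $\mathfrak{G}^*_n$, which is clear from the construction.
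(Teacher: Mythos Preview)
Your argument is correct and is exactly the route the paper takes: the corollary is stated immediately after Lemmas \ref{compactness of maximal theories} and \ref{co1} as their direct combination, and your contrapositive wiring of the two lemmas (extend via Lemma \ref{co1}(2), then apply Lemma \ref{compactness of maximal theories}) is precisely what the paper intends. Your observation that Lemma \ref{co1} is formally stated for $\mathfrak{G}^\Delta_{[0,1],A}$ but applies verbatim to the $\Delta$-free logic is also in line with the paper, which invokes the lemma for both $\mathfrak{G}^*_n$ here and for $\mathfrak{G}^\Delta_{[0,1],A}$ in Remark \ref{delta2}.
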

\begin{rem}\label{delta2}
Let $\mathcal{L}$ be first-order language.When $A'=\emptyset$ one could easily modify the proof of
Lemma \ref{embeddig godel algebra} and Lemma \ref{compactness of maximal theories}
to the case of \g logics equipped with the unary connective $\Delta$. So by Lemma \ref{co1},
$\mathfrak{G}^\Delta_{[0,1],A}$ admit the compactness as well as the entailment compactness.
\end{rem}
Despite the compactness property of $\mathfrak{G}^\Delta_{[0,1],A}$ for finite
set $A$, this fails in $\mathfrak{G}^\Delta_{[0,1],A}$ if $A'=\{0\}$.
\begin{exa}\label{gdeltaconter1}
Let $\mathcal{L}$ contain a nullary predicate symbol $\rho$ and let
$T=\{\rho\to\overline{\frac{1}{n}}\}_{n\in\mathbb{N}}\cup\{\neg(\Delta(\rho))\}$.
One can easily verify that in the \g logic
$\mathfrak{G}^\Delta_{[0,1],(0,1)_\downarrow}$, $T$ is finitely satisfiable but it is not satisfiable.
\end{exa}
\section{Compactness when the underlying language is uncountable}
As already mentioned, all the compactness results in \g logics are restricted
by the countability of the underlying language. The following example shows that when the
underlying language is uncountable, the compactness fails in almost all extensions of \g logics.
\begin{exa}\cite{mathover}
Let $\mathcal{L}$ be a relational language contains uncountably many unary predicate
symbols $\{R(x)\}\cup\{\rho_i(x)\}_{i\in(\omega_1+1)}$. Set,
\begin{center}
$T=\{\neg\forall x\,R(x), \forall x\,\big(\rho_1(x)\Rightarrow R(x)\big)\}
\cup\{\forall x\,\big(\rho_i(x)\Rightarrow \rho_j(x)\big): i>j\}_{i,j\in(\omega_1+1)}$
\end{center}
Clearly, in any \g logic $\mathfrak{G}_{[0,1],A}$, $T$ is finitely satisfiable but it is not satisfiable.
\end{exa}
However, when $V'\subseteq\{0\}$ we show that the \g logic $\mathfrak{G}_{V,A}$ may admit the compactness property,
even for uncountable first order languages. Note that in the above example, $T$ is not finitely
satisfiable in \g logics with truth value set $V$ such that $V'\subseteq\{0\}$.

We prove the compactness theorem by the ultraproduct method. For doing this we need a
similarity relation. In metrically semantic of t-norm based fuzzy logics,
the interpretation of similarity relation is a pseudo-metric.
\subsection{Ultrametric structure}\hfill

One of the most important tools in classical first-order logic
is the equality relation. The advantage of this relation appears almost
in all results of model theory of classical first-order logic.

Equality relation has some common properties, called \emph{Similarity Axioms}.
\begin{itemize}
\item[(1)] (Reflexivity) $\forall x\,(x\approx x)$.
\item[(2)] (Symmetry) $\forall x\forall y\,(x\approx y\to y\approx x)$.
\item[(3)] (Transitivity) $\forall x\forall y \forall z\,\left((x\approx y\wedge y\approx z)\to x\approx z\right)$.
\end{itemize}
Let $d$ be a binary predicate symbol in the language $\mathcal{L}_d$
that we want to role as equality relation in classical first-order logic.
If $\mathcal{M}$ be an $\mathcal{L}_d$-structure such that
\begin{center}
$\mathcal{M}\models\left\{\forall x\,d(x,x),
\forall x\forall y\,\left(d(x,y)\to d(y,x)\right),
\forall x\forall y \forall z\,\left((d(x,y)\wedge d(y,z))\to d(x,z)\right)\right\}$,
\end{center}
then it is easy to see that $d^\mathcal{M}$ must be a pseudo-ultrametric on the
universe of $\mathcal{M}$ (a pseudo-metric that
satisfies the strong triangle inequality, that is $d^\mathcal{M}(a,b)\le\max\{d^\mathcal{M}(a,c),d^\mathcal{M}(b,c)\}$ for all $a,b,c\in M$).
\begin{defn}
For a given language $\mathcal{L}_d$, containing a binary predicate symbol $d$, an
$\mathcal{L}_d$-\emph{ultrametric structure} or simply
an \emph{ultrametric structure} is an $\mathcal{L}_d$-structure $\mathcal{M}$ where
$(M,d^\mathcal{M})$ is an ultrametric space.
\end{defn}
\begin{exa}
Any first-order structure with the discrete metric is an ultrametric structure.
\end{exa}
\begin{exa}
Let $(M,d)$ be an ultrametric space. Define $d'(x,y)=\displaystyle\frac{d(x,y)}{1+d(x,y)}$.
$(M,d')$ is an ultrametric structure.
\end{exa}
In classical first-order logic when $P$ is an n-ary predicate symbol and $f$ is an
n-ary function symbol and $\bar{a}\in M^n$ is "equal" to $\bar{b}\in M^n$,
we have $P^\mathcal{M}(\bar{a})=P^\mathcal{M}(\bar{b})$ (as subsets of
$M^n$) and also $f^\mathcal{M}(\bar{a})$ is "equal" to
$f^\mathcal{M}(\bar{b})$. These properties express the extensional identically of
$\bar{a}$ and $\bar{b}$ with respect to the equality relation, which is called \emph{Congruence Axioms}.
\begin{itemize}
\item[(4)]For each n-ary predicate symbol $P$,\\
$\forall x_1...\forall x_n\forall y_1...\forall y_n\big(\wedge_{i=1}^n(x_i\approx y_i)\to(P(x_1,...,x_n)\leftrightarrow
P(y_1,...,y_n))\big)$.
\item[(5)]For each n-ary function symbol $f$,\\
$\forall x_1...\forall x_n\forall y_1...\forall y_n\big(\wedge_{i=1}^n(x_i\approx y_i)\to(f(x_1,...,x_n)\approx
f(y_1,...,y_n))\big)$.
\end{itemize}
Enforcing models to satisfy the congruence axioms leads to the right definition of structures
in first-order logic (that is the interpretation of an n-ary function symbol in
a model $\mathcal{M}$ would be a function $f^\mathcal{M}:M^n\to M$
and the interpretation of an n-ary predicate symbol $P$ would be a subset of $M^n$).

Now, let the $\mathcal{L}_d$-structure $\mathcal{M}$ satisfy the
congruence axioms for each function and predicate symbol, i.e,
\begin{center}
$\mathcal{M}\models\left\{\forall \bar{x}\forall \bar{y}(d(\bar{x},\bar{y})\to
d(f(\bar{x}),f(\bar{y})), \forall \bar{x}\forall \bar{y}(d(\bar{x},\bar{y})\to
(P(\bar{x})\leftrightarrow P(\bar{y}))\right\}_{f,P\in\mathcal{L}_d}$,
\end{center}
in which $d(\bar{x},\bar{y})$ is an abbreviation for $\wedge_{i=1}^n d(x_i,y_i)$. One
can easily verify that for each function symbol $f$,
$f^\mathcal{M}:(M^n,d^\mathcal{M})\to(M,d^\mathcal{M})$ would be a 1-Lipschitz continuous function,
in which $d^\mathcal{M}$ is a pseudo-ultrametric and
$d^\mathcal{M}(\bar{a},\bar{b})=\displaystyle\max_{1\le i\le n}\{d^\mathcal{M}(a_i,b_i)\}$
for every $\bar{a},\bar{b}\in M^n$. Furthermore, for each predicate symbol $P$,
$P^{\mathcal{M}}:(M^n,d^\mathcal{M})\to(V,d_{max})$ is a 1-Lipschitz continuous
function.
\begin{defn}
For a given language $\mathcal{L}_d$, containing a binary predicate symbol $d$, a \emph{Lipschitz
$\mathcal{L}_d$-structure} or simply a \emph{Lipschitz structure} $\mathcal{M}$ in the \g logic $\mathfrak{G}_V$
is a nonempty pseudo-ultrametric space $(M,d^\mathcal{M})$ called the universe of $\mathcal{M}$ together with:
\begin{itemize}
\item[a)] for any n-ary predicate symbol $P$ of $\mathcal{L}$, a 1-Lipschitz continuous function
\begin{center}
$P^{\mathcal{M}}:(M^n,d^\mathcal{M})\to(V,d_{max})$,
\end{center}
\item[b)] for any n-ary function symbol $f$ of $\mathcal{L}$, a 1-Lipschitz continuous function $f^{\mathcal{M}}:M^n\to M$,
\item[c)] for any constant symbol c of $\mathcal{L}$, an element $c^{\mathcal{M}}$ in the universe of $\mathcal{M}$.
\end{itemize}
\end{defn}
The following lemma is used to prove the compactness property for the \g logic
$\mathfrak{G}_\downarrow$. It is used in constructing a model for a theory with ultraproduct method.
\begin{lem}
Let $\mathcal{M}$ be a Lipschitz $\mathcal{L}_d$-structure. In the \g logic $\mathfrak{G}_{V,A}$ for every
$\mathcal{L}_d$-formula $\varphi(\bar{x})$ and $\bar{a},\bar{b}\subseteq M$,
\begin{center}
$d_{max}\big(\varphi^\mathcal{M}(\bar{a}),\varphi^\mathcal{M}(\bar{b})\big)\le d^\mathcal{M}(\bar{a},\bar{b})$.
\end{center}
\end{lem}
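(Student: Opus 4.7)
The plan is to prove the lemma by induction on the complexity of the formula $\varphi$, with a preliminary sub-induction on the complexity of terms. First I would show that for every $\mathcal{L}_d$-term $t(\bar{x})$ and every $\bar{a},\bar{b}\in M^n$ one has $d^\mathcal{M}(t^\mathcal{M}(\bar{a}),t^\mathcal{M}(\bar{b}))\le d^\mathcal{M}(\bar{a},\bar{b})$. The variable and constant cases are immediate from the definition $d^\mathcal{M}(\bar{a},\bar{b})=\max_i d^\mathcal{M}(a_i,b_i)$, and the function-symbol case follows from the 1-Lipschitz continuity of $f^\mathcal{M}$ together with the inductive hypothesis.

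Next I would treat the atomic case: for $\varphi=P(t_1,\dots,t_n)$, the 1-Lipschitz property of $P^\mathcal{M}\colon(M^n,d^\mathcal{M})\to(V,d_{max})$ composed with the term bound already established gives the required inequality. The nullary-connective and $\bot$ cases are trivial since the interpretations are constant. For the inductive step I would prove three separate \emph{connective lemmas}: that $\max$, the G\"odel implication $\dotto$, and the sup/inf operations on $V$ are each 1-Lipschitz with respect to $d_{max}$. The key observation to exploit throughout is that $d_{max}(x,y)\le\delta$ is equivalent to ``$x=y$, or both $x,y\le\delta$'', which is extremely convenient in case analyses.

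From these connective lemmas, the inductive step for $\varphi\wedge\psi$ and $\varphi\to\psi$ is immediate: if the inductive hypothesis gives $d_{max}(\varphi^\mathcal{M}(\bar{a}),\varphi^\mathcal{M}(\bar{b}))\le d^\mathcal{M}(\bar{a},\bar{b})$ and the analogous bound for $\psi$, apply the connective lemma with $\delta=d^\mathcal{M}(\bar{a},\bar{b})$. For the quantifier cases $\forall y\,\psi$ and $\exists y\,\psi$, the inductive hypothesis provides, for every $c\in M$, $d_{max}(\psi^\mathcal{M}(c,\bar{a}),\psi^\mathcal{M}(c,\bar{b}))\le d^\mathcal{M}(\bar{a},\bar{b})$, and then the sup/inf connective lemma concludes.

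The main obstacle I anticipate is the quantifier step, since the supremum of an infinite family is not automatically well-behaved under pointwise bounds in metrics on $[0,1]$; however the special shape of $d_{max}$ saves the day. Concretely, writing $\delta=d^\mathcal{M}(\bar{a},\bar{b})$ and setting $a_c=\psi^\mathcal{M}(c,\bar{a})$, $b_c=\psi^\mathcal{M}(c,\bar{b})$, one splits the index set into $I_0=\{c:a_c=b_c\}$ and $I_1=\{c:a_c\ne b_c\}$, with $a_c,b_c\le\delta$ on $I_1$. If $I_1=\emptyset$ the two sups coincide; otherwise a short argument shows that any excess above $\delta$ in one sup must come from $I_0$, whose values agree in the two families, so the two sups are either equal or both at most $\delta$, either way giving $d_{max}$-distance at most $\delta$. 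The infimum case is dual. Once these lemmas are in hand, the formula induction closes uniformly across all cases.
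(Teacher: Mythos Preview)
Your proposal is correct and follows essentially the same approach as the paper. The paper's proof is a one-line remark that the result is straightforward from the 1-Lipschitz continuity of the interpretations of function and predicate symbols together with the 1-Lipschitz continuity of $\dotto:(V^2,d_{max})\to(V,d_{max})$; your plan simply unpacks this into the full induction on terms and formulas, making explicit the additional (and equally easy) connective lemmas for $\max$ and for $\sup/\inf$ that the paper leaves implicit.
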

\begin{proof} Using the 1-Lipschitz continuity of
the interpretation of function and predicate symbols and also
1-Lipschitz continuity of $\dotto:(V^2,d_{max})\to (V,d_{max})$, the
proof is straightforward.
\end{proof}
The 1-Lipschitz continuity of the interpretation of function and predicate symbols
in Lipschitz structures leads to obtain a \emph{canonical ultrametric structure} $\mathcal{M}/d$
from a given Lipschitz structure $\mathcal{M}$. Verily, the underlying universe
of $\mathcal{M}/d$ is the set of equivalence classes of $M$ modulo the equivalence
relation $d^\mathcal{M}$. Furthermore, the interpretation of symbols of the language
could be defined by:
\begin{itemize}
\item $c^{\mathcal{M}/d}=[c^\mathcal{M}]_d$,
\item $f^{\mathcal{M}/d}([a_1]_d, ..., [a_n]_d)=f^\mathcal{M}(a_1, ..., a_n)$,
\item $P^{\mathcal{M}/d}([a_1]_d, ..., [a_n]_d)=P^\mathcal{M}(a_1, ..., a_n)$.
\end{itemize}
\begin{defn}
An $\mathcal{L}_d$-theory $T$ is called a \emph{metric-satisfiable} theory if there
is an ultramtric structure $\mathcal{M}$ which models $T$. Similarly, when $T$ has
a Lipschitz model $\mathcal{M}$, we call $T$ a \emph{Lipschitz-satisfiable} theory.
The notions of \emph{finitely metric-satisfiable} and \emph{finitely Lipschitz-satisfiable}
theories are similarly defined.
\end{defn}
\subsection{Ultraproduct method and the compactness theorem}\hfill

Let $\mathcal{L}$ be a first-order language (of any cardinality) and $T$ be a finitely
satisfiable $\mathcal{L}$-theory. Let $I=\mathcal{P}_{fin}(T)$ be the collection of all
finite subsets of $T$. For every $\varphi\in T$ assume that
$\varphi_T=\{\Sigma: \varphi\in\Sigma \text{ and } \Sigma\in I\}$. Obviously
$\{\varphi_T: \varphi\in T\}$ has the finite intersection property, and
so it is contained in an ultrafilter $\mathcal{D}$ on $I$.

Bellow we list some facts and notions about (ultra)filters on topological spaces.
\begin{itemize}
\item \cite{willard2004} A filter $\mathcal{F}$ on a topological space $X$ is convergent to
$x\in X$ if for all open sets $U$ containing $x$, $U\in\mathcal{F}$ .
\item \cite{willard2004} $X$ is compact Hausdorff space if and only if every ultrafilter
$\mathcal{F}$ on $X$ has a unique limit point.
\item Let $\{x_i\}_{i\in I}$ be a family of points of a topological space $X$. One could
view $\{x_i\}_{i\in I}$ as a function $f: I\to X$. If $\mathcal{D}$ is an (ultra)filter
on $I$, then $I_\mathcal{D}(X)=\{A\subseteq X: f^{-1}(A)\in\mathcal{D}\}$ is an (ultra)filter
on $X$. If $I_\mathcal{D}(X)$ is convergent to an element $x\in X$, we call $x$ a
$\mathcal{D}$-limit of the family $\{x_i\}_{i\in I}$.
\item Obviously $x$ is a $\mathcal{D}$-limit of $\{x_i\}_{i\in I}$ if and only if for
each open set $U$ containing $x$, the set $\{i\in I: x_i\in U\}$ belongs to the (ultra)filter $\mathcal{D}$.
\item If $X$ is a compact Hausdorff
and $x$ is the unique $\mathcal{D}$-limit of the family $\{x_i\}_{i\in I}$, we write $\lim_D x_i=x$.
\item If $X$ and $Y$ are two compact Hausdorff topological spaces, $f:X\to Y$ is a continuous function,
$\{x_i\}_{i\in I}$ is a family of elements of $X$, and $\mathcal{D}$ is an ultrafilter on $I$, then
$f(\lim_\mathcal{D}x_i)=\lim_\mathcal{D}f(x_i)$.
\end{itemize}
Now, we can prove the compactness theorem.
\begin{thm}\label{compactness final}
Let $\mathcal{L}_d$ be a first-order language and
$V'\subseteq\{0\}$. In the \g logic $\mathfrak{G}_V$ as well as the \g logic
$\mathfrak{G}_{V,V\setminus\{0,1\}}$, every finitely Lipschitz-satisfiable
theory $T$ is Lipschitz-satisfiable.
\end{thm}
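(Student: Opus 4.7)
My plan is to apply the ultraproduct method set up just above the theorem. Let $I = \mathcal{P}_{fin}(T)$, for each $\Sigma \in I$ fix a Lipschitz model $\mathcal{M}_\Sigma \models \Sigma$, and let $\mathcal{D}$ be the ultrafilter on $I$ extending $\{\varphi_T : \varphi \in T\}$ described just before the theorem. The hypothesis $V' \subseteq \{0\}$ is exactly what makes $(V, d_{max})$ a compact Hausdorff space: every positive $v \in V$ is $d_{max}$-isolated (the $d_{max}$-ball of radius $v$ about $v$ is $\{v\}$), and neighborhoods of $0$ are the initial segments $\{w \in V : w < \epsilon\}$. Consequently every $V$-valued family on $I$ has a unique $\mathcal{D}$-limit, which is the ingredient the preamble identifies as driving the method.

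On $\hat M = \prod_\Sigma M_\Sigma$ I would set $\hat d(f,g) \equals \lim_\mathcal{D} d^{\mathcal{M}_\Sigma}(f(\Sigma), g(\Sigma))$, quotient by $\{\hat d = 0\}$ to obtain $M$, and then interpret constants and function symbols coordinate-wise modulo the kernel, interpret predicate symbols by $P^\mathcal{M}([\bar g]) \equals \lim_\mathcal{D} P^{\mathcal{M}_\Sigma}(\bar g(\Sigma))$, and interpret each $\bar r$ (when working in $\mathfrak{G}_{V, V \setminus \{0,1\}}$) by the constant family $r$. Well-definedness on equivalence classes and the 1-Lipschitz property of every interpretation will follow directly from the corresponding properties of the factors together with the strong triangle inequality, so $\mathcal{M}$ is a genuine Lipschitz structure.

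The heart of the proof is a \L{}o\'{s}-style lemma, $\varphi^\mathcal{M}([\bar g]) = \lim_\mathcal{D} \varphi^{\mathcal{M}_\Sigma}(\bar g(\Sigma))$ for every $\mathcal{L}_d$-formula $\varphi(\bar x)$, proved by induction on $\varphi$. Atomic, conjunction, and implication cases reduce to the observation that $\max$ and $\dotto$ are continuous on $V^2$ in the $d_{max}$-topology, combined with the last bullet of the preamble asserting that continuous maps commute with $\mathcal{D}$-limits. The $\forall$-case works cleanly because every $V$-valued supremum is attained under our hypothesis: positive values of $V$ are $d_{max}$-isolated, so $\sup S = r > 0$ forces $r \in S$, while $\sup S = 0$ forces $S = \{0\}$. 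Choosing in each factor a witness $a_\Sigma$ attaining the supremum, the element $[h]$ defined by $h(\Sigma) = a_\Sigma$ realizes the required $\mathcal{D}$-limit via the induction hypothesis.

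The genuine obstacle is the $\exists$-case when the $\mathcal{D}$-limit of the factor infima equals $0$: the per-factor infimum may fail to be attained (as for $V = \{0\} \cup \{1/n : n \geq 1\}$), so an arbitrary choice of witnesses need not tend to $0$ in the $\mathcal{D}$-limit. I will resolve this by a saturation argument exploiting that $\mathcal{D}$ is countably incomplete whenever $T$ is infinite (the finite-$T$ case being trivial, since any countable $T_0 \subseteq T$ gives $\bigcap_{\varphi \in T_0} \varphi_T = \emptyset$ because each $\Sigma \in I$ is finite). Concretely, for a decreasing $\{B_k\}_{k \in \mathbb{N}} \subseteq \mathcal{D}$ with empty intersection, intersecting with $A_k = \{\Sigma : \inf_a \psi^{\mathcal{M}_\Sigma}(a,\bar g(\Sigma)) < 1/k\} \in \mathcal{D}$ and setting $h(\Sigma) = a_\Sigma^{k(\Sigma)}$, where $k(\Sigma) = \max\{k : \Sigma \in A_k \cap B_k\}$ and each $a_\Sigma^k$ witnesses the $1/k$-approximation on $A_k$, one gets $\{\Sigma : \psi^{\mathcal{M}_\Sigma}(h(\Sigma), \bar g(\Sigma)) < 1/k\} \in \mathcal{D}$ for every $k$, forcing $\psi^\mathcal{M}([h], [\bar g]) = 0$. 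With \L{}o\'{s} in place the conclusion is immediate: for each $\varphi \in T$ the set $\varphi_T$ lies in $\mathcal{D}$ and $\varphi^{\mathcal{M}_\Sigma} = 0$ on $\varphi_T$, whence $\varphi^\mathcal{M} = \lim_\mathcal{D} \varphi^{\mathcal{M}_\Sigma} = 0$, so $\mathcal{M} \models T$.
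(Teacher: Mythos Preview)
Your argument is correct and follows the same ultraproduct route as the paper, which is quite terse about the induction on formulas; your added detail is useful. One point deserves comment, however: the ``genuine obstacle'' you identify in the $\exists$-case is not actually there, and the countable-incompleteness detour, while valid, is unnecessary. To prove $\inf_{[h]}\psi^{\mathcal M}([h],[\bar g])=0$ you do not need a single element $[h]$ with $\psi^{\mathcal M}([h],[\bar g])=0$; it suffices, for each fixed $\epsilon>0$, to pick in every factor an $\epsilon$-approximate witness $h^\epsilon(\Sigma)$ on the $\mathcal D$-large set $A_\epsilon=\{\Sigma:\inf_a\psi^{\mathcal M_\Sigma}(a,\bar g(\Sigma))<\epsilon\}$. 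Since $[0,\epsilon)\cap V$ is $d_{max}$-closed (its complement $[\epsilon,1]\cap V$ consists of isolated points), the induction hypothesis yields $\psi^{\mathcal M}([h^\epsilon],[\bar g])=\lim_{\mathcal D}\psi^{\mathcal M_\Sigma}(h^\epsilon(\Sigma),\bar g(\Sigma))<\epsilon$, and letting $\epsilon\to 0$ gives the infimum $0$. This is the argument the paper's one-line appeal to ``Lipschitz continuity of logical connectives'' is presumably gesturing at. A minor further difference: the paper does not quotient by $\{\hat d=0\}$ at all, working directly with the pseudo-ultrametric on $\prod_\Sigma M_\Sigma$, which is permitted since Lipschitz structures are defined over pseudo-ultrametric spaces.
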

\begin{proof}
Let $I=\mathcal{P}_{fin}(T)$ be the collection of all
finite subsets of $T$ and $\mathcal{D}$ be an ultrafilter on $I$ which
contain $\{\varphi_T: \varphi\in T\}$ where
$\varphi_T=\{\Sigma: \varphi\in\Sigma \text{ and } \Sigma\in I\}$.

Since $T$ is finitely Lipschitz-satisfiable, for each $T_i\in I$ there is a
Lipschitz structure $\mathcal{M}_i$ which models $T_i$. Let
$M=\prod_{i\in I}M_i$ and for each n-ary predicate symbol
$R$ define $R^\mathcal{M}:M^n\to V$ by
\begin{center}
$R^\mathcal{M}(\{x_{1i}\}_{i\in I}, ..., \{x_{ni}\}_{i\in I})
=\lim_\mathcal{D}R^{\mathcal{M}_i}(x_{1i}, ..., x_{ni})$.
\end{center}
Note that $V'\subseteq\{0\}$. So $(V,d_{max})$ is a compact Hausdorff space. Thus, the
$\mathcal{D}$-limit of the family $\{R^{\mathcal{M}_i}(x_{1i}, ..., x_{ni})\}_{i\in I}$
is unique and therefore $R^\mathcal{M}$ is well-defined.

One could easily verify that $d^\mathcal{M}$ is a pseudo-ultrametric on $M$.
Furthermore, Lipschitz continuity of $\{R^{\mathcal{M}_i}\}_{i\in I}$ implies
that $R^\mathcal{M}$ is Lipschitz continuous.

Now, for each constant symbol $c$ let $c^\mathcal{M}=\{c^{\mathcal{M}_i}\}_{i\in I}$. Also,
for each n-ary function symbol $f$ define $f^\mathcal{M}:M^n\to M$ by
$f^\mathcal{M}(\{x_{1i}\}_{i\in I}, ..., \{x_{ni}\}_{i\in I})=
\{f^{\mathcal{M}_i}(x_{1i}, ..., x_{ni})\}_{i\in I}$. Note that, by Lipschitz continuity of
$\{f^{\mathcal{M}_i}\}_{i\in I}$, $f^\mathcal{M}$ is well-defined and Lipschitz continuous.

Finally, by an induction on the complexity
of formulas and using the Lipschitz continuity of $\{R^{\mathcal{M}_i}\}_{i\in I}$
and also Lipschitz continuity of logical connectives on $(V,d_{max})$,
it is easy to see that for each formula $\varphi(x_1 ,..., x_n)$ and
elements $a_k=\{a_{ki}\}_{i\in I}$ of $M$ for $1\le k\le n$,
\begin{center}
$\varphi^\mathcal{M}(a_1,...,a_n)=\lim_\mathcal{D}\varphi^{\mathcal{M}_i}(a_{1i},...,a_{ni})$.
\end{center}
Thus, $\mathcal{M}$ is a Lipschitz model of $T$.
\end{proof}
\section{final remarks and further works}
We study the compactness property of extensions of first-order \g logic
with truth constants or with the Baaz's $\Delta$ connective.
The compactness theorem is one of the basic theorems that is used in the model theory
of classical first-order logic. Using the ultraproduct method, we
prove the compactness theorem for theories whose underlying
first-order language are uncountable. But,
the ultraproduct method forced us to consider
the \g sets with the reverse semantical meaning to interpret the similarity relation
as a (pseudo) metric. Thus, semantically $0$ is the absolute truth and $1$ is the
absolute falsity. The translation of results for the usual semantic of \g logic
could be stated as follows:
\begin{enumerate}
  \item (Corollary \ref{final compactness gn} and Corollary
  \ref{final entailment compactness gn}) If $A'=\emptyset$ then for any countable first-order language $\mathcal{L}$, the \g logic $\mathfrak{G}_{[0,1],A}$
  admits the compactness property as well as the entailment compactness.
  \item (Corollary \ref{compactness gn gdown}, Example \ref{fail entail compact gdown}, and
  Theorem \ref{approx entail com theorem}) If $A'=\{1\}$ then for any countable first-order language $\mathcal{L}$, the \g logic
  $\mathfrak{G}_{[0,1],A}$ admits the compactness property
  as well as the approximate entailment compactness, while the entailment compactness fails.
  \item (Example \ref{limit}) The compactness property fails in $\mathfrak{G}_{[0,1],A}$ whenever $A$ has a limit point
  $a\ne 1$ such that $a\in A\cup\{0\}$. In particular, RGL (first-order \g logic enriched with
  rational truth constants $A=(0,1)\cap\mathbb{Q}$ as nullary connectives) fails to have the
  compactness property.
  \item (Theorem \ref{compactness final}) If $V'\subseteq\{1\}$ then in the \g logics $\mathfrak{G}_{V}$ and
  $\mathfrak{G}_{V,V\setminus\{0,1\}}$ every finitely Lipschitz-satisfiable theory is Lipschitz-satisfiable.
  \item (Remark \ref{delta2}) If $A$ is finite then $\mathfrak{G}^\Delta_{[0,1],A}$ admit the compactness property as well as
  the entailment compactness property.
  \item (Example \ref{limit}, Example \ref{gdeltaconter1}) If $A'=\{1\}$ or $A'=\{0\}$ then the compactness fails in $\mathfrak{G}^\Delta_{[0,1],A}$.
\end{enumerate}
Regarding the first-order \g logic, it is seen that
the absolute truth and absolute falsity has an asymmetry.
Indeed, not falsity could be stated while it is impossible to
separate truth from not truth. The outcome of equipping the \g logic with
$\Delta$ is objective as a kind of symmetry. Indeed, not only one could
states "not truth" as well as the "not falsity", but also we have a
symmetry in items (5) and (6), and also the results are hold in both
semantical views of the \g logic.

A future interesting topic to study is the model theoretical aspects of
$\mathfrak{G}_\downarrow$, $\mathfrak{G}^*_\downarrow$ and
$\mathfrak{G}^\Delta_{[0,1]}$. Indeed, the expressive power of
the language of this logics for stating "$\mathcal{M}\nmodels\varphi$" by means of
"$\mathcal{M}\models\neg\Delta(\varphi)$" or "there is a natural number
n such that $\mathcal{M}\models\varphi\to\bar{\frac{1}{n}}$", helps us to
develop the model theory of these logics.

{\bf Acknowledgments:} Author is indebted to Massoud Pourmahdian for his enlightening
contributions through the preparation of this work.


\bibliographystyle{alpha}
\bibliography{amin}

\end{document}